\newtheorem{thm}{Theorem}[section]
\newtheorem{la}[thm]{Lemma}
\newtheorem{Defn}[thm]{Definition}
\newtheorem{Remark}[thm]{Remark}
\newtheorem{prop}[thm]{Proposition}
\newtheorem{cor}[thm]{Corollary}
\newtheorem{Example}[thm]{Example}
\newtheorem{Number}[thm]{\!\!}
\newenvironment{defn}{\begin{Defn}\rm}{\end{Defn}}
\newenvironment{example}{\begin{Example}\rm}{\end{Example}}
\newenvironment{rem}{\begin{Remark}\rm}{\end{Remark}}
\newenvironment{proof}{{\noindent\bf Proof.}}%
                  {\nopagebreak\hspace*{\fill}$\Box$\medskip\medskip\par}   
\newcommand{\Punkt}{\nopagebreak\hspace*{\fill}$\Box$}
\newcommand{\wb}{\overline}
\newcommand{\ve}{\varepsilon}
\newcommand{\at}{\symbol{'100}}
\newcommand{\wt}{\widetilde}
\newcommand{\tensor}{\otimes}
\newcommand{\mto}{\mapsto}
\newcommand{\N}{{\mathbb N}}
\newcommand{\R}{{\mathbb R}}
\newcommand{\K}{{\mathbb K}}
\newcommand{\C}{{\mathbb C}}
\newcommand{\cO}{{\cal O}}
\newcommand{\cg}{{\mathfrak g}}
\newcommand{\dl}{{\displaystyle \lim_{\longrightarrow}}}
\newcommand{\pl}{{\displaystyle \lim_{\longleftarrow}}}
\newcommand{\sub}{\subseteq}
\DeclareMathOperator{\id}{id}
\newcommand{\cT}{{\mathcal T}}
\newcommand{\cY}{{\mathcal Y}}
\DeclareMathOperator{\conv}{conv}
\DeclareMathOperator{\Supp}{supp}
\DeclareMathOperator{\lcx}{lcx}
\begin{document}
$\;$\\[-24mm]
\begin{center}
{\Large\bf Continuity of bilinear maps on direct sums\\[2mm] of topological vector spaces}\\[7mm]
{\bf Helge Gl\"{o}ckner\footnote{Supported by DFG, grant GL 357/5--1.}}\vspace{4mm}
\end{center}
\begin{abstract}\noindent
We prove a criterion for continuity of bilinear maps
on countable direct sums of topological vector spaces.
As a first application, we get a new proof for the fact (due to Hirai et al.\ 2001)
that the map\linebreak
$f\colon C^\infty_c(\R^n)\times C^\infty_c(\R^n)\to C^\infty_c(\R^n)$,
$(\gamma,\eta)\mapsto \gamma*\eta$
taking a pair of
test functions to their convolution is continuous.
The criterion also allows an open problem by K.-H. Neeb to be solved:
If $E$ is a
locally convex space, regard the tensor algebra
$T(E):=\bigoplus_{j\in\N_0}T^j(E)$ as the locally convex direct sum of
projective tensor powers of $E$.
We show that $T(E)$ is a topological algebra if and only if every sequence
of continuous seminorms on~$E$ has an upper bound.
In particular, if~$E$ is metrizable, then $T(E)$ is a topological algebra
if and only if~$ E$ is normable.
Also, $T(E)$ is a topological algebra
if $E$ is DFS or~$k_\omega$.\vspace{3mm}
\end{abstract}
{\footnotesize {\em Classification}:
46M05
% tensor products
(Primary);
42A85,
%convolution classical
%
44A35,
% convolution
%
46A13,
% proj nd ind limits of spaces
%
46A11,
% spaces determined by compactness properties
%
46A16,
% not lcx spaces
%
46E25,
% rings and algs of fctns
%
46F05\linebreak
% spaces of test functions
%
%46M40\\
% ind and proj limits / cat theory methods
%
%(Secondary)\\
%
%
{\em Key words}: Test function, smooth function, compact support, convolution, bilinear map,
continuity, direct sum, tensor algebra, normed space, metrizable space, Silva space}\\[6mm]
\noindent
{\bf Introduction and statement of results}\\[2mm]
Consider a bilinear map
$\beta\colon \bigoplus_{i\in\N}E_i\times \bigoplus_{j\in\N}F_j\to H$,
where $H$ is a
topo\-logical vector space
and $(E_i)_{i\in \N}$ and $(F_j)_{j\in \N}$
are sequences of topological vector
spaces (which we identify with the corresponding subspaces of the direct sum).
We prove and exploit the following continuity criterion:\\[4mm]
{\bf Theorem A.}
\emph{$\beta$ is continuous if, for all double sequences $(W_{i,j})_{i,j\in \N}$
of $0$-neighbourhoods in $H$,
there exist $0$-neighbourhoods~$U_i$ and $R_{i,j}$ in~$E_i$
and $0$-neighbourhoods $V_j$ and $S_{i,j}$ in~$F_j$
for $i,j\in\N$, such that}
\begin{equation}\label{goodincl}
\begin{array}{l}
\mbox{$\beta(\hspace*{.4mm}U_i\hspace*{.1mm}\times S_{i,j})\sub W_{i,j}$ for all $i,j\in \N$ such that $i<j$; and}\\
\mbox{$\beta(R_{i,j}\times V_j)\sub W_{i,j}$ for all $i,j\in \N$ such that $i\geq j$.}
\end{array}\vspace{2.1mm}
\end{equation}
As a first application,
we obtain a new proof for the continuity of
the bilinear map
$f\colon C^\infty_c(\R^n)\times C^\infty_c(\R^n)\to C^\infty_c(\R^n)$,
$(\gamma,\eta)\mto \gamma*\eta$
taking a pair of test functions
to their convolution (Corollary~\ref{Hir}).
This was first shown in
\cite[Proposition 2.3]{Hir}.\footnote{For hypocontinuity of convolution
$C^\infty(\R^n)'\!\times\! C^\infty_c(\R^n)\to C^\infty_c(\R^n)$,
see~\cite[p.\,167]{Sw}.}
Our proof allows~$\R^n$ to be replaced with a Lie group $G$, in which case $f$ is continuous
if and only if $G$ is $\sigma$-compact~\cite{Glo}.\\[2.3mm]
For a second application of Theorem~A,
consider a locally convex space~$E$ over $\K\in\{\R,\C\}$.
Let $T^0_\pi(E):=\K$, $T^1_\pi(E):=E$
and endow the tensor powers $T^2_\pi(E):=E\tensor_\pi E$,
$T^{j+1}_\pi(E):=E\tensor_\pi T^j_\pi(E)$ with the projective
tensor product topology (see, e.g., \cite{Sch}).
Topologize the tensor algebra $T_\pi(E):=\bigoplus_{j\in\N_0}T^j_\pi(E)$
(see \cite[XVI, \S7]{LaA})
as the locally convex direct sum~\cite{Bou}.
In infinite-dimensional
Lie theory, the question arose of whether
$T_\pi(E)$ always is a
topological algebra, i.e., whether the algebra multiplication
is continuous~\cite[Problem VIII.5]{Nee}.\footnote{If $\cg$ is a locally convex topological Lie algebra
and $T_\pi(\cg)$ a topological algebra, then also the enveloping algebra $U(\cg)$
(which is a quotient of $T_\pi(\cg)$)
is a topological algebra with the quotient topology.
This topology on $U(\cg)$ has been used implicitly in~\cite{Ne2}.}
We solve this question (in the negative),
and actually obtain a characterization of those locally convex spaces~$E$ for which $T_\pi(E)$
is a topological algebra.\\[2.3mm]
To formulate our solution,
given continuous seminorms~$p$ and~$q$ on~$E$
let us write $p\preceq q$ if $p\leq Cq$ pointwise for some $C>0$.
For~$\theta$ an infinite\linebreak
cardinal number,
let us say that~$E$ satisfies the \emph{upper bound condition} for~$\theta$
(the $\text{UBC}(\theta)$, for short) if
for every set $P$ of continuous seminorms on~$E$ of cardinality $|P|\leq\theta$,
there exists a continuous seminorm~$q$ on~$E$ such that $p\preceq q$ for all $p\in P$.
If $E$ satisfies the $\text{UBC}(\aleph_0)$,
we shall simply say that~$E$ satisfies the \emph{countable upper bound condition}.
Every normable space satisfies the $\text{UBC}(\theta)$,
and there also exist non-normable examples (see Section~\ref{secexs}).
We obtain the following characterization:\\[3.5mm]
{\bf Theorem B.} \emph{Let $E$ be a locally convex space.
Then $T_\pi(E)$ is a topological algebra if and only if $E$
satisfies the countable upper bound condition.}\\[3.5mm]
In particular, for $E$ a metrizable locally convex space,
$T_\pi(E)$ is a topological algebra if and only if~$E$ is normable (Corollary~\ref{thmmetr}).\\[2.3mm]
The upper bound conditions introduced here are also useful
for the theory of vector-valued test functions.
If $E$ is a locally convex space and $M$ a paracompact, non-compact, finite-dimensional
smooth manifold, let $C^\infty_c(M,E)$ be the space of all
compactly supported smooth $E$-valued functions on~$M$.
Consider the bilinear map
\[
\Phi\colon C^\infty_c(M,\R)\times E\to C^\infty_c(M,E), \quad(\gamma,v)\mto \gamma v,
\]
where $(\gamma v)(x):=\gamma(x) v$.
If $M$ is $\sigma$-compact,
then $\Phi$ is continuous if and only if $E$ satisfies
the countable upper bound condition.
If $M$ is not $\sigma$-compact, then $\Phi$ is continuous if and only if $E$ satisfies the $\text{UBC}(\theta)$,
for~$\theta$ the number of connected components of~$M$ (see \cite[Theorem~B]{PRA}).\\[2.3mm]
Without recourse to the countable upper bound condition,
for a certain class of non-metrizable locally convex spaces we show directly that $T_\pi(E)$ is
a topological algebra.
Recall that a Hausdorff topological space $X$ is a \emph{$k_\omega$-space}
if $X=\dl\,K_n$\vspace{-.3mm} as a topological space for a sequence
$K_1\sub K_2\sub\cdots$ of compact spaces (a so-called \emph{$k_\omega$-sequence})
with union $\bigcup_{n=1}^\infty K_n=X$ \cite{Fra}, \cite{GGH}.
For example, the dual space $E'$ of any metrizable locally convex space is a $k_\omega$-space
when equipped with the compact-open topology (cf.\ \cite[Corollary 4.7]{Aus}).
In particular, every Silva space (or DFS-space)
is a $k_\omega$-space, that is, every locally convex direct limit
of Banach spaces $E_1\sub E_2\sub\cdots$,
such that all inclusion maps $E_n\to E_{n+1}$
are compact operators \cite[Example~9.4]{G3}.
For\linebreak
instance,
every vector space of countable dimension (like $\R^{(\N)}$)
is a Silva space (and hence a $k_\omega$-space)
when equipped with
the finest locally convex topology. We show:\\[2.5mm]
{\bf Theorem C.}
\emph{Let $E$ be a locally convex space. If $E$
is a $k_\omega$-space} (\emph{e.g., if $E$ is a DFS-space}),
\emph{then $T_\pi(E)$ is a topological algebra.}\\[2.5mm]
To enable a proof of Theorem~C, we first study tensor powers
$T_\nu^j(E)$ in the category of all (not necessarily locally convex)
topological vector spaces, for~$E$ as in the theorem.\footnote{See \cite{Tu2} and \cite{Ten} for such tensor products,
and the references therein.}
We show that $T^j_\nu(E)$
and $T_\nu(E):=\bigoplus_{j\in \N_0}T^j_\nu(E)$
are $k_\omega$-spaces (Lemmas~\ref{tensko} and~\ref{tensgood})
and that $T_\nu(E)=\dl\,\prod_{j=1}^kT^j_\nu(E)$\vspace{-.3mm}
as a topological space (Lemma~\ref{tensgood}).
This allows us to deduce
that $T_\nu(E)$
is a topological algebra (Proposition~\ref{universe}),
which entails that also the convexification
$T_\pi(E)=(T_\nu(E))_{\lcx }$ is a topological algebra (see Section~\ref{secpc}).\footnote{(Quasi-)convexifications of direct limits of $k_\omega$-spaces also appear in \cite{ATC}, for other goals.}\\[2.5mm]
The conclusion of Theorem~C remains valid if $E=F_{\lcx }$ for a topological\linebreak
vector space $F$ which is a $k_\omega$-space (Proposition~\ref{remvalid}).
This implies, for example, that $T_\pi(E)$ is a topological algebra
whenever $E$ is the free locally convex space over a $k_\omega$-space~$X$ (Corollary~\ref{free}).
Combining this result with Theorem~B,
we deduce: If a locally convex space $E$ is a $k_\omega$-space,
or of the form $E=F_{\lcx }$
for some topological vector space~$F$ which is a $k_\omega$-space, then~$E$ satisfies the
countable upper bound condition (Corollary~\ref{komcount}).\\[2.5mm]
Of course, also many non-metrizable locally convex spaces
$E$ exist for which $T_\pi(E)$ is not a topological algebra.
This happens, for example,
if $E$ has a
topological vector subspace~$F$
which is metrizable but not normable
(e.g., if $E=\R^{(\N)}\times \R^\N$).
In fact, $E$ cannot satisfy the countable upper bound
condition because this property would be inherited by~$F$
\cite[Proposition~3.1\,(c)]{PRA}.
\section{Notational conventions}
Throughout the article, $\K\in \{\R,\C\}$, and
topological vector spaces over~$\K$ are considered
(which need not be Hausdorff).
If $q$ is a seminorm
on a vector space $E$,
we write $B^q_r(x):=\{y\in E\colon q(y-x)<r\}$
and $\wb{B}^q_r(x):=\{y\in E\colon q(y-x)\leq r\}$
for the open (resp., closed) ball of radius $r>0$ around $x\in E$.
We let $(E_q,\|.\|_q)$ be the normed space associated with~$q$,
defined via
\begin{equation}\label{assno}
E_q:=E/q^{-1}(0)\quad\mbox{ and }\quad
\|x+q^{-1}(0)\|_q:=q(x)\,.
\end{equation}
Also, we let
\begin{equation}\label{canoq}
\rho_q\colon E\to E_q\,,\quad \rho_q(x):=x+q^{-1}(0)
\end{equation}
be the canonical map.
If~$q$ is continuous with respect to
a locally convex vector topology on~$E$,
then~$\rho_q$ is continuous.
If $(E,\|.\|)$ is a normed space and $q=\|.\|$,
we also write $B^E_r(x):=B^q_r(x)$ and
$\wb{B}^E_r(x):=\wb{B}^q_r(x)$ for the balls.
A subset $U$ of a vector space $E$ is called \emph{balanced}
if $\wb{B}^\K_1(0) U\sub U$.
We set $\N:=\{1,2,\ldots\}$ and $\N_0:=\N\cup\{0\}$.\\[2.3mm]
If $I$ is a countable set and $(E_i)_{i\in I}$
a family of topological  vector spaces,
its direct sum is the space $\bigoplus_{i\in I}E_i$
of all $(x_i)_{i\in I}\in \prod_{i\in I}E_i$
such that $x_i=0$ for all but finitely many $i\in I$.
The sets of the form
\[
\bigoplus_{i\in I} U_i:=\bigoplus_{i\in I}E_i\cap\prod_{i\in I}U_i,
\]
for $U_i$ ranging through the set of $0$-neighbourhoods
in $E_i$,
form a basis of $0$-neighbourhoods for
a vector topology on $\bigoplus_{i\in I}E_i$.
We shall always equip countable direct sums with this topology
(called the `box topology'), which is locally convex if so is each $E_i$.
Then a linear map $\bigoplus_{i\in I}E_i\to F$
to a topological vector space $F$ is continuous
if and only if all of its restrictions to the~$E_i$
are continuous
(\cite[\S4.1 \& \S4.3]{Jar};
cf.\  \cite{Bou}
for the locally convex case).\\[2.5mm]
A {\em topological algebra} is a topological vector space $A$,
together with a continuous bilinear map
$A\times A\to A$.
If $A$ is assumed associative or unital,
we shall say so explicitly.
\section{Bilinear maps on direct sums}\label{bilsum}
We now prove Theorem~A.
Afterwards, we discuss the hypotheses of the theorem
and formulate special cases which are easier to apply.\\[2.3mm]
{\bf Proof of Theorem A.}
By Proposition~5 in \cite[Chapter I, \S1, no.\,6]{Bou},
the bilinear map $\beta$ will be continuous if
it is continuous at $(0,0)$.
To verify the latter, let $W_0$ be a $0$-neighbourhood in $H$.
Recursively, pick $0$-neighbourhoods $W_k\sub H$ for $k\in\N$
such that $W_k+W_k\sub W_{k-1}$.
Then
\begin{equation}\label{bucket}
(\forall k \in \N)\;\; W_1+\cdots+W_k\sub W_0\,.
\end{equation}
Let $\sigma\colon \N\times \N\to \N$ be a bijection,
and $W_{i,j}:=W_{\sigma(i,j)}$ for $i,j\in\N$. By (\ref{bucket}),
\begin{equation}\label{bucket2}
\bigcup_{(i,j)\in\Phi}W_{i,j}\sub W_0\;\;\,\mbox{for every finite subset $\Phi\sub\N^2$.}
\end{equation}
For $i,j\in\N$, choose $0$-neighbourhoods $U_i$ and $R_{i,j}$ in $E_i$ and $0$-neighbourhoods
$V_j$ and $S_{i,j}$ in $F_j$
such that (\ref{goodincl}) holds.
For $i \in \N$, the set
$P_i:=U_i\cap \bigcap_{j=1}^iR_{i,j}$ is a $0$-neighbourhood in~$E_i$.
For $j\in\N$, let $Q_j\sub F_j$ be the $0$-neighbourhood
$Q_j:=V_j\cap\bigcap_{i=1}^j S_{i,j}$.
We claim that\vspace{-1mm}
\begin{equation}\label{claimnbh}
(\forall i,j\in \N)\;\; \beta(P_i\times Q_j)\sub W_{i,j}\,.\vspace{-1mm}
\end{equation}
If this is true, then $P:=\bigoplus_{i\in \N}P_i$ is a $0$-neighbourhood in
$\bigoplus_{i\in\N}E_i$ and $Q:=\bigoplus_{j\in \N}Q_j$ a $0$-neighbourhood
in $\bigoplus_{j\in\N}F_j$
such that $\beta(P\times Q)\sub W_0$, as\vspace{-1mm}
\[
\sum_{(i,j)\in\Phi}\beta(P_i\times Q_j)\sub
\sum_{(i,j)\in\Phi}W_{i,j}\sub W_0\vspace{-1mm}
\]
for each finite subset $\Phi\sub \N^2$
(by (\ref{claimnbh}) and (\ref{bucket2})) and therefore
$\beta(P\times Q)=\bigcup_\Phi \sum_{(i,j)\in\Phi}\beta(P_i\times Q_j)
\sub W_0$. Thus continuity of $\beta$ at $(0,0)$
is established, once~(\ref{claimnbh})
is verified. To prove~(\ref{claimnbh}), let $i,j\in \N$.
If $i\geq j$, then $\beta(P_i\times Q_j)\sub \beta(R_{i,j}\times V_j)\sub W_{i,j}$.
If $i<j$, then $\beta(P_i\times Q_j)\sub \beta(U_i\times S_{i,j})\sub W_{i,j}$.\vspace{3mm}\Punkt

\noindent
The criterion from Theorem~A is sufficient, but not necessary for
continuity.
\begin{example}
Let $H:=\R^\N$ be the space of all real-valued sequences,
equipped with the product topology,
and $E_i:=F_i:=H$ for all $i\in \N$.
Then $\R^\N$ is an algebra under the pointwise multiplication
\[
\delta \colon \R^\N\times \R^\N \to \R^\N\,,\quad
\delta((x_i)_{i\in\N}, (y_i)_{i\in \N}):=
(x_i)_{i\in\N}\diamond  (y_i)_{i\in \N}:=
(x_iy_i)_{i\in \N}\, .
\]
We show that the bilinear map
\[
\beta\colon \bigoplus_{i\in \N}E_i\times \bigoplus_{j\in \N}F_j\to H\,,\quad
\beta((f_i)_{i\in \N},(g_j)_{j\in \N}):=\sum_{i,j\in \N} f_i\diamond g_j
\]
is continuous, but does not satisfy the
hypotheses of Theorem~A.\\[2.5mm]
To see this, note that the seminorms
\[
p_n\colon \R^\N\to [0,\infty[\,, \quad p_n((x_i)_{i\in \N}):=
\max\{|x_i|\colon i=1,\ldots, n\}
\]
define the topology on $\R^\N$ for $n\in \N$.
For all $n\in \N$, we have
\begin{equation}\label{pnsubm}
(\forall f,g\in \R^\N)\quad
p_n(f\diamond g)\leq p_n(f)p_n(g)\,,
\end{equation}
entailing that $\delta$ is continuous and thus $\R^\N$ a topological algebra.
Also, if $W\sub H$ is a $0$-neighbourhood, then $B^{p_n}_\ve(0)\sub W$
for some $n\in \N$
and $\ve>0$. Set $Q_i:=B^{p_n}_{2^{-i}\sqrt{\ve}}(0)$
for $i\in \N$.
Then $Q_i\diamond Q_j\sub B^{p_n}_{2^{-i}2^{-j}\ve}(0)$
for all $i,j\in \N$ (by (\ref{pnsubm})), entailing
that $Q:=\bigoplus_{i\in \N}Q_i$ is a zero-neighbourhood
in $\bigoplus_{i\in \N} E_i$
such that $\beta(Q\times Q)\sub\sum_{(i,j)\in \N^2}B^{p_n}_{2^{-i}2^{-j}\ve}(0)\sub B^{p_n}_\ve(0)\sub W$.
Hence $\beta$ is continuous at $(0,0)$ and hence continuous.\\[2.5mm]
On the other hand, let $r,s>0$ and $k,m,n\in \N$.
\begin{equation}\label{contrad}
\mbox{If $k>n$ or $k>m$, then }
\,(\exists f\in B^{p_n}_r(0), g\in B^{p_m}_s(0))\;\; f\diamond g \not\in B^{p_k}_1(0)\,.
\end{equation}
In fact, assume that $k>m$ (the case $k>n$ is similar).
Let $e_k\in \R^\N$ be the sequence whose $k$-th entry is~$1$,
while all others vanish.
Then $f:=\frac{r}{2}e_k\in B^{p_n}_r(0)$,
$g:=\frac{2}{r}e_k\in B^{p_m}_s(0)$
(noting that $p_m(g)=0$ since $k>m$),
and $f\diamond g \not\in B^{p_k}_1(0)$
as $p_k(f\diamond g)=p_k(e_k)=1$.\\[2.5mm]
Now consider the $0$-neighbourhoods $W_{i,j}:=B^{p_{i+j}}_1(0)$ in $H$.
Suppose there are $0$-neighbourhoods
$U_i$, $R_{i,j}$, $V_i$ and $S_{i,j}$ in $\R^\N$
such that (\ref{goodincl}) holds -- this will yield a contradiction.
There is $n\in \N$ and $r>0$ such that $B^{p_n}_r(0)\sub U_1$.
Also, for each $j\in \N$ there are $m_j\in \N$ and $s_j>0$ with
$B^{p_{m_j}}_{s_j}(0)\sub S_{1,j}$.
Then
\[
B^{p_n}_r(0)\diamond B^{p_{m_j}}_{s_j}(0) =
\beta(B^{p_n}_r(0)\times B^{p_{m_j}}_{s_j}(0))
\sub W_{1,j}=B^{p_{1+j}}_1(0)
\]
for all $j\geq 2$, by (\ref{goodincl}). Thus $n\geq 1+j$ for all $j\geq 2$, by (\ref{contrad}).
This is impossible.
\end{example}
Our applications use the following consequence of Theorem~A:
\begin{cor}\label{squeeze}
Let $(E_i)_{i\in \N}$ and $(F_j)_{j\in \N}$
be sequences of topological vector spaces
and $H$ be a topological vector space.
Then a bilinear mapping\linebreak
$\beta\colon \bigoplus_{i\in\N}E_i\times \bigoplus_{j\in\N}F_j\to H$
is continuous if
there exist $0$-neighbourhoods~$U_i$ in~$E_i$
and~$V_j$ in~$F_j$
for $i,j\in\N$, such that {\rm(a)} and {\rm(b)} hold:
\begin{itemize}
\item[\rm(a)]
For all $0$-neighbourhoods $W\sub H$
and $i,j\in\N$,
there exists a\linebreak
$0$-neighbourhood~$S_{i,j}$ in~$F_j$
such that $\beta(U_i\times S_{i,j})\sub W$.
\item[\rm(b)]
For all $0$-neighbourhoods $W\sub H$ and $i,j\in\N$,
there exists a\linebreak
$0$-neighbourhood~$R_{i,j}$ in~$E_i$
such that
$\beta(R_{i,j}\times V_j)\sub W$.
\end{itemize}
\end{cor}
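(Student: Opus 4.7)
The plan is to derive the corollary directly from Theorem~A by manufacturing, for an arbitrary double sequence $(W_{i,j})_{i,j\in\N}$ of $0$-neighbourhoods in~$H$, the data $(U_i, R_{i,j}, V_j, S_{i,j})$ required by (\ref{goodincl}). The $0$-neighbourhoods $U_i\sub E_i$ and $V_j\sub F_j$ given by the hypothesis will serve as the $U_i$ and $V_j$ for Theorem~A; the point is that conditions (a) and (b) are precisely tailored to produce the remaining $S_{i,j}$ and $R_{i,j}$.

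More concretely, let $(W_{i,j})_{i,j\in\N}$ be an arbitrary double sequence of $0$-neighbourhoods in~$H$. For each pair $(i,j)\in\N^2$ with $i<j$, I would apply condition~(a) to the $0$-neighbourhood $W:=W_{i,j}$ to obtain a $0$-neighbourhood $S_{i,j}\sub F_j$ such that $\beta(U_i\times S_{i,j})\sub W_{i,j}$. For each pair $(i,j)\in\N^2$ with $i\geq j$, I would apply condition~(b) to $W:=W_{i,j}$ to obtain a $0$-neighbourhood $R_{i,j}\sub E_i$ such that $\beta(R_{i,j}\times V_j)\sub W_{i,j}$. For the unused indices, that is, $S_{i,j}$ when $i\geq j$ and $R_{i,j}$ when $i<j$, any $0$-neighbourhood (say, $F_j$ and $E_i$ themselves, or $V_j$ and $U_i$) will do, since these objects do not appear in~(\ref{goodincl}).

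With these choices, both lines of the conclusion~(\ref{goodincl}) of Theorem~A hold by construction. Since the double sequence $(W_{i,j})_{i,j\in\N}$ was arbitrary, Theorem~A applies and yields the continuity of~$\beta$.

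There is no genuine obstacle here: the corollary simply repackages Theorem~A under the stronger (and more checkable) hypotheses that \emph{one and the same} $U_i$ (resp.\ $V_j$) works for the first (resp.\ second) family of inclusions across all $j$ (resp.\ $i$), with $W_{i,j}$ allowed to vary. The only aspect worth flagging is making explicit that Theorem~A never refers to $S_{i,j}$ for $i\geq j$ nor to $R_{i,j}$ for $i<j$, so these entries are free and we are not forced to verify any inclusion for them.
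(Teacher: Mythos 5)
Your proposal is correct and follows essentially the same route as the paper: for an arbitrary double sequence $(W_{i,j})$, one invokes (a) and (b) with $W=W_{i,j}$ to produce the $S_{i,j}$ and $R_{i,j}$ demanded by Theorem~A, keeping the given $U_i$ and $V_j$ fixed. The paper simply applies (a) and (b) for all pairs $(i,j)$ rather than splitting into the cases $i<j$ and $i\geq j$, but this is an immaterial difference.
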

\begin{proof}
Let $(W_{i,j})_{i,j\in\N}$ be a double sequence of $0$-neighbourhoods
in $H$.
For $i,j\in\N$, choose $0$-neighbourhoods $U_i\sub E_i$ and $V_j\sub F_j$
as described in the corollary. Then, by (a) and (b)
(applied with $W=W_{i,j}$),
for all $i,j\in \N$ there exist $0$-neighbourhoods
$R_{i,j}\sub E_i$ and $S_{i,j}\sub F_j$ such that
\[
\beta(U_i\times S_{i,j})\sub W_{i,j}\quad\mbox{and}\quad
\beta(R_{i,j}\times V_j)\sub W_{i,j}\,.
\]
Hence Theorem~A applies.
\end{proof}
The next lemma helps to check the hypotheses of Corollary~\ref{squeeze}
in important cases.
\begin{la}\label{useBan}
Let $E$, $F$ and $H$ be topological vector spaces
and $\beta\colon E\times F\to H$ be bilinear.
Assume $\beta=b \circ (\id_E\times \phi)$ for a continuous linear map $\phi\colon F\to X$
to a normed space $(X,\|.\|)$ and continuous bilinear map
$b \colon E\times X\to H$.
Then $V:=\phi^{-1}(B^X_1(0))$
is a $0$-neighbourhood in $F$ such that, for each $0$-neighbourhood
$W\sub H$, there is a $0$-neighbourhood $R\sub E$
with $\beta(R\!\times \!V)\!\sub\!  W$.
\end{la}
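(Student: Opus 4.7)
The plan is essentially a rescaling argument that exploits the fact that $B^X_1(0)$ is a bounded neighbourhood of $0$ in the normed space $X$.

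First, I would observe that $V = \phi^{-1}(B^X_1(0))$ is a $0$-neighbourhood in $F$ because $\phi$ is continuous and $B^X_1(0)$ is a $0$-neighbourhood in $X$. This is immediate and takes care of half of the statement.

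For the main claim, fix a $0$-neighbourhood $W \subseteq H$. By continuity of $b$ at $(0,0)$, there exist a $0$-neighbourhood $R' \subseteq E$ and a $\delta > 0$ such that $b(R' \times B^X_\delta(0)) \subseteq W$ (one may always shrink the second factor of a product $0$-neighbourhood in $E \times X$ to a ball of some radius, since $X$ is normed). Now I would set $R := \delta R'$. Since $\delta \neq 0$, scalar multiplication by $\delta$ is a homeomorphism of the topological vector space $E$, so $R$ is a $0$-neighbourhood in $E$.

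It remains to verify $\beta(R \times V) \subseteq W$. For any $r \in R$, write $r = \delta r'$ with $r' \in R'$, and let $v \in V$. By definition of $\beta$ and bilinearity of $b$,
\[
\beta(r, v) = b(\delta r', \phi(v)) = b(r', \delta \phi(v)).
\]
Because $\phi(v) \in B^X_1(0)$, we have $\|\delta \phi(v)\| = \delta \|\phi(v)\| < \delta$, so $\delta \phi(v) \in B^X_\delta(0)$. Hence $b(r', \delta \phi(v)) \in b(R' \times B^X_\delta(0)) \subseteq W$, as desired.

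I do not foresee a real obstacle here; the only point requiring care is the rescaling step, where one must notice that the normed structure on $X$ permits absorbing the set $\phi(V) \subseteq B^X_1(0)$ into an arbitrarily small ball $B^X_\delta(0)$ by moving a scalar $\delta$ from the second factor into the first via bilinearity. This is precisely why the hypothesis that $X$ is normed (rather than merely a general topological vector space) is used.
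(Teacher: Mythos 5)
Your argument is correct and is essentially identical to the paper's proof: both obtain a $0$-neighbourhood $S\sub E$ and a radius $r>0$ with $b(S\times B^X_r(0))\sub W$, set $R:=rS$, and move the scalar across the two arguments of $b$ by bilinearity to absorb $\phi(V)\sub B^X_1(0)$ into $B^X_r(0)$. No gaps.
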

\begin{proof}
Since $b^{-1}(W)$ is a $0$-neighbourhood in $E\times X$,
there exist a $0$-neighbourhood $S\sub E$ and $r>0$ such that
$S\times B^X_r(0)\sub b^{-1}(W)$.
Set $R:=rS$. Using that $b$ is bilinear,
we obtain $\beta(R\times V)\sub b(rS\times B^X_1(0))
=b(S\times rB^X_1(0))=b(S\times B^X_r(0))\sub W$.
\end{proof}
If $F$ is a normed space, we can simply set $X:=F$, $\phi:=\id_F$
and $b:=\beta$ in Lemma~\ref{useBan}, i.e., the conclusion is always guaranteed
then (with $V=B^F_1(0)$).
\begin{cor}\label{normedcase}
Let $(E_i)_{i\in \N}$ and $(F_j)_{j\in \N}$
be sequences of normed spaces, $H$ be a
topological vector space and $\beta_{i,j}\colon E_i\times F_j\to H$
be continuous bilinear maps for $i,j\in\N$.
Then the following bilinear map is continuous:\vspace{-.5mm}
\begin{equation}\label{combimap}
\beta \colon \bigoplus_{i\in\N}E_i\times \bigoplus_{j\in\N}F_j\to H\,,\quad
\beta((x_i)_{i\in\N},(y_j)_{j\in\N}):=
\sum_{(i,j)\in \N^2}\beta_{i,j}(x_i,y_j)\,.\vspace{-2mm}
\end{equation}
\end{cor}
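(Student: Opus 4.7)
The plan is to apply Corollary~\ref{squeeze} directly, using the balls of radius one in the normed spaces as the canonical $0$-neighbourhoods, and to verify hypotheses (a) and (b) via (two applications of) Lemma~\ref{useBan}. Concretely, I would set
\[
U_i := B^{E_i}_1(0) \sub E_i \quad \text{and} \quad V_j := B^{F_j}_1(0) \sub F_j
\]
for all $i,j \in \N$. These are $0$-neighbourhoods in the respective normed spaces; note in particular that $U_i$ depends only on $i$ and $V_j$ only on $j$, so they fit the framework of Corollary~\ref{squeeze}. Also, under the standard identification of $E_i$ and $F_j$ with subspaces of the direct sums, the restriction of $\beta$ to $E_i \times F_j$ coincides with the continuous bilinear map $\beta_{i,j}$.

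To check hypothesis~(b), I would apply Lemma~\ref{useBan} to $\beta_{i,j}\colon E_i \times F_j \to H$ with $X := F_j$, $\phi := \id_{F_j}$ and $b := \beta_{i,j}$. Since $F_j$ is a normed space and $\beta_{i,j}$ is continuous, the hypotheses of the lemma are met, and $\phi^{-1}(B^{F_j}_1(0)) = V_j$. The lemma then yields, for each $0$-neighbourhood $W \sub H$, a $0$-neighbourhood $R_{i,j} \sub E_i$ with $\beta(R_{i,j} \times V_j) = \beta_{i,j}(R_{i,j} \times V_j) \sub W$, as required.

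For hypothesis~(a), I would run the symmetric argument: consider the continuous bilinear map $\beta_{i,j}^{\mathrm{op}}\colon F_j \times E_i \to H$, $(y,x) \mto \beta_{i,j}(x,y)$, and apply Lemma~\ref{useBan} to it with $X := E_i$, $\phi := \id_{E_i}$ and $b := \beta_{i,j}^{\mathrm{op}}$. Here $\phi^{-1}(B^{E_i}_1(0)) = U_i$, and the lemma supplies, for any $0$-neighbourhood $W \sub H$, a $0$-neighbourhood $S_{i,j} \sub F_j$ such that $\beta_{i,j}^{\mathrm{op}}(S_{i,j} \times U_i) \sub W$, i.e.\ $\beta(U_i \times S_{i,j}) \sub W$. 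With (a) and (b) in place, Corollary~\ref{squeeze} applies and gives the continuity of $\beta$.

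There is no real obstacle: the corollary is essentially a combination of Corollary~\ref{squeeze} with the remark immediately following Lemma~\ref{useBan} that, when one of the factors is normed, the conclusion of that lemma is automatic. The only point requiring a moment's care is the symmetry step, i.e.\ reversing the roles of the two factors in order to obtain~(a) from the version of Lemma~\ref{useBan} stated in the excerpt.
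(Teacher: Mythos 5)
Your proposal is correct and follows exactly the paper's argument: the paper likewise takes $U_i$ and $V_j$ to be the unit balls and invokes Lemma~\ref{useBan} (with $X$ the normed factor itself and $\phi$ the identity) to verify hypotheses (a) and (b) of Corollary~\ref{squeeze}, the symmetric application for (a) being left implicit there. Your explicit treatment of the ``opposite'' bilinear map merely spells out a detail the paper omits.
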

\begin{proof}
Lemma~\ref{useBan} shows that
the hypotheses of Corollary~\ref{squeeze}
are satisfied if we define $U_i$ and $V_j$
as the unit balls,
$U_i:=B^{E_i}_1(0)$ and $V_j:=B^{F_j}_1(0)$.
\end{proof}
If $H$ is locally convex, then Corollary~\ref{normedcase}
also follows from \cite[Corollary~2.1]{Dah}.\linebreak
In the locally convex case,
Theorem~A can be reformulated as follows:
\begin{cor}\label{reallyuse}
Let $(E_i)_{i\in \N}$ and $(F_j)_{j\in \N}$
be sequences of locally convex spaces, $H$ be a locally
convex space and $\beta_{i,j}\colon E_i\times F_j\to H$
be continuous bilinear maps for $i,j\in\N$.
Assume that, for every double sequence $(P_{i,j})_{i,j\in \N}$
of continuous seminorms on~$H$,
there are continuous seminorms~$p_i$ $($for $i\in \N)$ and $p_{i,j}$ on~$E_i$
$($for $i\geq j)$
and continuous seminorms~$q_j$ $($for $j\in \N)$ and $q_{i,j}$ on~$F_j$
$($for $i<j)$, such that:
\begin{itemize}
\item[\rm(a)]
$P_{i,j}(\beta_{i,j}(x,y))\leq p_i(x)q_{i,j}(y)$ for all $i<j$ in~$\N$,
$x\in E_i$, $y\in F_j$; and
\item[\rm(b)]
$P_{i,j}(\beta_{i,j}(x,y))\leq p_{i,j}(x)q_j(y)$ for all $i\geq j$ in $\N$
and all $x\in E_i$, $y\in F_j$.
\end{itemize}
Then the bilinear map $\beta$ described in {\rm(\ref{combimap})} is continuous.
\end{cor}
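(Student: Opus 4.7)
The plan is to deduce this from Theorem~A by translating the seminorm estimates in~(a) and~(b) into inclusions between $0$-neighbourhoods. Since only finitely many components of $(x_i)_{i\in\N}\in \bigoplus_i E_i$ and $(y_j)_{j\in\N}\in\bigoplus_j F_j$ are nonzero, the sum in~(\ref{combimap}) has only finitely many nonzero summands, so $\beta$ is a well-defined bilinear map; only continuity needs to be verified.

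To apply Theorem~A, I would start with an arbitrary double sequence $(W_{i,j})_{i,j\in\N}$ of $0$-neighbourhoods in~$H$. Local convexity of~$H$ furnishes, for each $(i,j)$, a continuous seminorm $P_{i,j}$ on~$H$ with $B^{P_{i,j}}_1(0)\sub W_{i,j}$ (if the initial choice only yields $B^{P_{i,j}}_\varepsilon(0)\sub W_{i,j}$, rescale by replacing $P_{i,j}$ with $P_{i,j}/\varepsilon$). Feeding this double sequence into the corollary's hypothesis then produces continuous seminorms $p_i$, $q_j$, $p_{i,j}$ (for $i\geq j$), and $q_{i,j}$ (for $i<j$) satisfying~(a) and~(b).

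Next I would define the $0$-neighbourhoods demanded by Theorem~A as open seminorm balls: set $U_i:=B^{p_i}_1(0)$, $V_j:=B^{q_j}_1(0)$, $R_{i,j}:=B^{p_{i,j}}_1(0)$ for $i\geq j$, and $S_{i,j}:=B^{q_{i,j}}_1(0)$ for $i<j$; since Theorem~A imposes no condition on $R_{i,j}$ when $i<j$ or on $S_{i,j}$ when $i\geq j$, for these indices one may simply take $R_{i,j}:=E_i$ and $S_{i,j}:=F_j$. Under the identification of~$E_i$ and~$F_j$ with subspaces of the direct sums, $\beta$ restricts on $E_i\times F_j$ to $\beta_{i,j}$. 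For $i<j$, any $x\in U_i$ and $y\in S_{i,j}$ then satisfy $P_{i,j}(\beta_{i,j}(x,y))\leq p_i(x)q_{i,j}(y)<1$ by~(a), so $\beta(U_i\times S_{i,j})\sub B^{P_{i,j}}_1(0)\sub W_{i,j}$; the case $i\geq j$ is symmetric, via~(b). Both clauses of~(\ref{goodincl}) then hold, and Theorem~A yields the continuity of~$\beta$.

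The argument is little more than the standard dictionary between seminorms and their open unit balls on a locally convex space, so there is no real obstacle; the one point worth flagging is the asymmetric structure of the data in Theorem~A, which leaves half of the $R_{i,j}$ and half of the $S_{i,j}$ unconstrained and therefore free to be chosen arbitrarily.
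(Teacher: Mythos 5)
Your proposal is correct and follows essentially the same route as the paper: choose seminorms $P_{i,j}$ with $B^{P_{i,j}}_1(0)\sub W_{i,j}$, take the open unit balls of $p_i$, $q_j$, $p_{i,j}$, $q_{i,j}$ as the neighbourhoods required by Theorem~A, and verify the two inclusions from (a) and (b). Your explicit handling of the unconstrained $R_{i,j}$ (for $i<j$) and $S_{i,j}$ (for $i\geq j$) is a small tidiness improvement over the paper, which leaves this implicit.
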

\begin{proof}
Let $W_{i,j}\sub H$ be $0$-neighbourhoods for $i,j\in\N$.
Then there are continuous seminorms $P_{i,j}$ on $H$ such that
$B^{P_{i,j}}_1(0)\sub W_{i,j}$.
Let $p_i$, $p_{i,j}$, $q_j$ and $q_{i,j}$ be as described in Corollary~\ref{reallyuse}.
Then $U_i:=B^{p_i}_1(0)$ and $R_{i,j}:=B^{p_{i,j}}_1(0)$
are $0$-neighbourhoods in $E_i$. Also,
$V_j:=B^{q_j}_1(0)$ and $S_{i,j}:=B^{q_{i,j}}_1(0)$
are $0$-neighbourhoods in $F_j$.
If $i<j$, $x\in U_i$ and $y\in S_{i,j}$, then $P_{i,j}(\beta_{i,j}(x,y))\leq p_i(x)q_{i,j}(y)<1$,
whence $\beta_{i,j}(x,y)\in B^{P_{i,j}}_1(0)\sub W_{i,j}$
and thus $\beta_{i,j}(U_i\times S_{i,j})\sub W_{i,j}$.
Likewise, $\beta_{i,j}(R_{i,j}\times V_j)\sub W_{i,j}$
if $i\geq j$.
Thus Theorem~A applies.
\end{proof}
Let $G$ be a Lie group, with Haar measure $\mu$.
Let $b \colon E_1\times E_2\to F$ be
a continuous bilinear map between locally convex spaces
(where $F$ is sequentially complete), and $r,s,t\in \N_0\cup\{\infty\}$
such that $t\leq r+s$. Using Corollary~\ref{reallyuse}, it is possible to characterize those
$(G,r,s,t,b)$ for which the convolution map
\[
\beta\colon C^r_c(G,E_1)\times C^s_c(G,E_2)\to C^t_c(G,F)\,, \quad
(\gamma,\eta)\mto \gamma*_b\eta
\]
is continuous,
where $(\gamma*_b\eta)(x):=\int_Gb(\gamma(y),\eta(y^{-1}x))\,d\mu(y)$ (see \cite{Glo}).
\section{Continuity of convolution of test functions}
Using the continuity criterion, we obtain a new proof for~\cite[Proposition~2.3]{Hir}:
\begin{cor}\label{Hir}
The map $C^\infty_c(\R^n)\times C^\infty_c(\R^n)
\to C^\infty_c(\R^n)$,
$(\gamma,\eta)\mapsto \gamma*\eta$ is continuous.
\end{cor}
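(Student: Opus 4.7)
The plan is to reduce continuity of convolution to Corollary~\ref{reallyuse} by decomposing $C^\infty_c(\R^n)$ via a partition of unity. Choose a locally finite family $(K_i)_{i\in\N}$ of compact subsets of~$\R^n$ whose interiors cover~$\R^n$, and a smooth partition of unity $(\chi_i)_{i\in\N}$ on~$\R^n$ with $\mathrm{supp}\,\chi_i \sub K_i$ and $\sum_i\chi_i = 1$. Let $E_i := C^\infty_{K_i}(\R^n)$, a Fr\'echet space under the seminorms $\phi \mapsto \|\partial^\alpha\phi\|_\infty$, and write $H := C^\infty_c(\R^n)$. The linear map
\[
\alpha\colon H \to \bigoplus_{i\in \N} E_i, \qquad \phi \mapsto (\chi_i\phi)_{i\in\N},
\]
is well-defined by local finiteness and continuous: by the LF-property of~$H$, it suffices to check continuity on each Fr\'echet step $C^\infty_K(\R^n)$, where $\alpha$ factors through the finite product over those~$i$ with $K_i\cap K \ne \emptyset$, and each component $\phi \mapsto \chi_i\phi$ is continuous.

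Next, I would introduce the bilinear map
\[
\tilde\beta\colon \bigoplus_{i\in\N} E_i \times \bigoplus_{j\in\N} E_j \to H, \qquad
\tilde\beta\bigl((\phi_i)_i,(\psi_j)_j\bigr) := \sum_{i,j\in\N} \phi_i * \psi_j
\]
(a finite sum). Using $\sum_i\chi_i=1$ one checks $\tilde\beta(\alpha(\gamma),\alpha(\eta)) = \gamma*\eta$ for $\gamma,\eta \in H$, so continuity of convolution will follow from continuity of~$\alpha$ and of~$\tilde\beta$. Now $\tilde\beta$ has the form~(\ref{combimap}) with continuous bilinear maps $\beta_{i,j}\colon E_i\times E_j \to H$, $(\phi,\psi)\mapsto \phi*\psi$, whose images lie in the Fr\'echet subspace $C^\infty_{K_{i,j}}(\R^n)\sub H$ for the fixed compact $K_{i,j}:=K_i+K_j$. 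Given continuous seminorms $(P_{i,j})_{i,j\in\N}$ on~$H$, there therefore exist $C_{i,j}>0$ and $N_{i,j}\in\N_0$ with
\[
P_{i,j}(\phi) \leq C_{i,j} \sum_{|\alpha|\leq N_{i,j}} \|\partial^\alpha\phi\|_\infty \qquad \text{for all } \phi \in C^\infty_{K_{i,j}}(\R^n).
\]

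Combining $\partial^\alpha(\phi*\psi) = \phi*(\partial^\alpha\psi) = (\partial^\alpha\phi)*\psi$ with the Young-type estimate $\|u*v\|_\infty \leq \|u\|_1\|v\|_\infty$ and the bound $\|u\|_1 \leq \mathrm{vol}(K_i)\|u\|_\infty$ for $u \in E_i$ then yields
\[
P_{i,j}(\phi*\psi) \leq C_{i,j}\,\mathrm{vol}(K_i)\|\phi\|_\infty \sum_{|\alpha|\leq N_{i,j}} \|\partial^\alpha\psi\|_\infty,
\]
together with the symmetric bound obtained by interchanging~$\phi$ and~$\psi$. Setting $p_i(\phi) := \mathrm{vol}(K_i)\|\phi\|_\infty$ on~$E_i$ and $q_j(\psi) := \mathrm{vol}(K_j)\|\psi\|_\infty$ on~$E_j$, and letting $q_{i,j}$ on~$E_j$ and $p_{i,j}$ on~$E_i$ be the derivative-sup seminorms $C_{i,j}\sum_{|\alpha|\leq N_{i,j}}\|\partial^\alpha(\cdot)\|_\infty$, both conditions (a) and (b) of Corollary~\ref{reallyuse} are satisfied; hence~$\tilde\beta$, and therefore also convolution, is continuous. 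The only real delicacy is bookkeeping around the LF-topology of~$H$ and the proper identification of continuous seminorms on its Fr\'echet steps; the actual convolution estimate is entirely classical.
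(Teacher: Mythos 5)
Your argument is correct and follows essentially the same route as the paper: the same partition-of-unity decomposition $f=\beta\circ(\Phi\times\Phi)$ into convolutions $C^\infty_{K_i}(\R^n)\times C^\infty_{K_j}(\R^n)\to C^\infty_{K_i+K_j}(\R^n)$, and the same key estimate combining $\partial^\alpha(\phi*\psi)=\phi*\partial^\alpha\psi=(\partial^\alpha\phi)*\psi$ with $\|u*v\|_\infty\le\|u\|_{L^1}\|v\|_\infty$ and $\|u\|_{L^1}\le\mathrm{vol}(K_i)\|u\|_\infty$. The only cosmetic difference is that you verify the hypotheses of Corollary~\ref{reallyuse} (the seminorm reformulation of Theorem~A) by explicit inequalities, whereas the paper encodes the identical estimate as a factorization of $\beta_{i,j}$ through the normed space $(C^0_{K_j}(\R^n),\|\cdot\|_\infty)$ and invokes Lemma~\ref{useBan} together with Corollary~\ref{squeeze}.
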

Before we present the proof, let us fix further notation and recall basic facts.
Given an open set $\Omega\sub \R^n$, $r\in \N_0\cup\{\infty\}$
and a compact set $K\sub \Omega$,
let $C^r_K(\Omega)$ be the space of all $C^r$-functions $\gamma\colon \Omega\to\K$
with support $\text{supp}(\gamma)\sub K$.
Using the partial derivatives
$\partial^\alpha\gamma:=\frac{\partial^\alpha \gamma}{\partial x^\alpha}$
for multi-indices $\alpha=(\alpha_1,\ldots,\alpha_n)\in \N_0^n$ of order
$|\alpha|:=\alpha_1+\cdots+\alpha_n\leq r$
and the supremum norm $\|.\|_\infty$,
we define norms
$\|.\|_k$ on $C^r_K(\Omega)$ for $k\in \N_0$ with $k\leq r$
via\vspace{-1mm}
\[
\|\gamma\|_k:=\max_{|\alpha|\leq k}\|\partial^\alpha\gamma\|_\infty,\vspace{-3mm}
\]
and give $C^r_K(\Omega)$ the locally convex vector topology determined
by these norms.
We give $C^r_c(\Omega)=\bigcup_K C^r_K(\Omega)$
the locally convex direct limit\linebreak
topology,
for $K$ ranging through the set of compact subsets of $\Omega$.
\begin{la}\label{facts}
\begin{itemize}
\item[\rm(a)]
The pointwise multiplication
$C^r_K(\Omega)\times C^r_K(\Omega)\to C^r_K(\Omega)$,
$(\gamma,\eta)\mto \gamma\eta$ is continuous.
\item[\rm(b)]
Let $(h_i)_{i\in \N}$ be a locally finite, smooth partition of
unity\footnote{See, e.g., \cite[Chapter II, \S3, Corollary 3.3]{Lan}.}
on $\Omega$, such that each $h_i$ has compact support $K_i:=\Supp(h_i)\sub\Omega$.
Then the linear map
$\Phi\colon C^r_c(\Omega)\to\bigoplus_{i\in \N} C^r_{K_i}(\Omega)$,
$\gamma\mto (h_i\gamma)_{i\in \N}$ is continuous.
\end{itemize}
\end{la}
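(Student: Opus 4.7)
For part~(a), the approach is a straightforward Leibniz-rule estimate. Since $\Supp(\gamma\eta)\sub\Supp(\gamma)\cap\Supp(\eta)\sub K$, the product lies in $C^r_K(\Omega)$. Differentiating via the Leibniz rule and estimating each summand in $\|.\|_\infty$ produces, for every $k\leq r$, a submultiplicative bound
\[
\|\gamma\eta\|_k\;\leq\;C_k\,\|\gamma\|_k\,\|\eta\|_k
\]
with a constant $C_k$ depending only on $k$ and~$n$ (essentially a sum of multinomial coefficients of total order~$k$). Since the norms $\|.\|_k$ define the topology on $C^r_K(\Omega)$, this estimate yields continuity of the bilinear product at $(0,0)$, and hence everywhere by Proposition~5 in \cite[Chapter I, \S1, no.\,6]{Bou} (as already invoked in the proof of Theorem~A).

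For part~(b), I would use the defining property of the locally convex direct limit: $\Phi$ is continuous if and only if its restriction to each step $C^r_K(\Omega)$ is continuous, for $K\sub\Omega$ compact. Fix such a $K$. The geometric key is to replace the countable family of products by a finite one: by local finiteness of $(h_i)_{i\in\N}$ and compactness of~$K$, cover $K$ by finitely many open sets each of which meets only finitely many of the $K_i$, and let $I_K\sub\N$ be the resulting finite union of indices. Then $K\cap K_i=\emptyset$ for every $i\notin I_K$, so that $h_i\gamma\equiv 0$ for every $\gamma\in C^r_K(\Omega)$ and $i\notin I_K$.

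Consequently $\Phi|_{C^r_K(\Omega)}$ takes values in the sub-direct sum $\bigoplus_{i\in I_K}C^r_{K_i}(\Omega)$. Being indexed by a finite set, its box topology coincides with the product topology, and the inclusion into the full direct sum $\bigoplus_{i\in\N}C^r_{K_i}(\Omega)$ is a topological embedding (a box $\bigoplus_{i\in I_K}V_i$ is the trace of $\bigoplus_{i\in\N}U_i$ with $U_i:=V_i$ for $i\in I_K$ and $U_i:=C^r_{K_i}(\Omega)$ otherwise). Thus continuity reduces to continuity of each coordinate map $\gamma\mto h_i\gamma$, $C^r_K(\Omega)\to C^r_{K_i}(\Omega)$, for $i\in I_K$. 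Since $\Supp(h_i\gamma)\sub\Supp(h_i)=K_i$, the target is correct, and the same Leibniz estimate as in~(a) gives $\|h_i\gamma\|_k\leq C_k\|h_i\|_k\|\gamma\|_k$, with the constant $\|h_i\|_k$ finite because $h_i$ is smooth with compact support. This provides continuity of the (now linear) map~$\gamma\mto h_i\gamma$, and the argument is complete. I do not anticipate a genuine obstacle: the proof is two Leibniz computations bridged by the direct-limit universal property, and the only point demanding care is the finite-index reduction in~(b), where one must exploit local finiteness together with compactness of~$K$ to pass from a countable partition of unity to a manageable finite collection.
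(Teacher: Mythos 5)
Your proposal is correct and follows essentially the same route as the paper: the Leibniz estimate $\|\gamma\eta\|_k\le 2^k\|\gamma\|_k\|\eta\|_k$ for (a), and for (b) the reduction to the steps $C^r_K(\Omega)$ of the direct limit, the finiteness of $\{i:K\cap K_i\neq\emptyset\}$ via local finiteness, and the identification of the finite sub-direct sum with a finite product. The only cosmetic difference is that you bound $\|h_i\gamma\|_k$ directly, while the paper obtains continuity of $\gamma\mapsto h_i\gamma$ as a restriction of the multiplication on $C^r_{K\cup K_i}(\Omega)$ from part (a); these are interchangeable.
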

\begin{proof}
(a) Let $E:=(1,\ldots,1)\in \R^n$
and $k\in \N_0$ such that $k\leq r$.
By the Leibniz Rule,
$\|\partial^\alpha(\gamma\eta)\|_\infty\leq
\sum_{\beta\leq\alpha}
({\alpha \atop \beta})
\|\partial^\beta\gamma\|_\infty
\|\partial^{\alpha-\beta}\eta\|_\infty$,
using multi-index notation.
Since
$\sum_{\beta\leq\alpha}
({\alpha \atop \beta})=
(E+E)^\alpha
=2^{|\alpha|}\leq 2^k$ if $|\alpha|\leq k$
and $\|\partial^\beta\gamma\|_\infty
\|\partial^{\alpha-\beta}\eta\|_\infty\leq \|\gamma\|_k\|\eta\|_k$,
we deduce that $\|\gamma\eta\|_k\leq 2^k\|\gamma\|_k\|\eta\|_k$.
Hence multiplication is continuous at $(0,0)$ and
hence continuous, being bilinear.

(b) To see that the linear map $\Phi$ is continuous, it suffices to show
that its restriction $\Phi_K$ to $C^r_K(\Omega)$
is continuous, for each compact set $K\sub \Omega$.
As $K$ is compact and $(K_i)_{i\in \N}$
locally finite, the set $F:=\{i\in \N\colon
K \cap K_i\not =\emptyset\}$ is finite.
Because the image of $\Phi_K$
is contained in the subspace $\bigoplus_{i\in F}C^r_{K_i}(\Omega)\cong
\prod_{i\in F}C^r_{K_i}(\Omega)$
of $\bigoplus_{i\in\N}C^r_{K_i}(\Omega)
\cong \bigoplus_{i\in F}C^r_{K_i}(\Omega)\oplus\bigoplus_{i\in \N\setminus F}C^r_{K_i}(\Omega)$,
the map $\Phi_K$
will be continuous if its components
with values in $C^r_{K_i}(\Omega)$ are continuous for all $i\in F$.
But these are the maps $C^r_K(\Omega)\to C^r_{K_i}(\Omega)$, $\gamma\mto h_i \gamma$,
which are continuous as restrictions of the maps
$C^r_{K\cup K_i}(\Omega)\to C^r_{K\cup K_i}(\Omega)$, $\gamma\mto h_i \gamma$,
whose continuity follows from~(a).
\end{proof}
If $\gamma\in C^0_c(\R^n)$ and $\eta\in C^0_c(\R^n)$, it is well-known that
$\gamma*\eta\in C^0_c(\R^n)$, with
$\text{supp}(\gamma*\eta)\sub
\text{supp}(\gamma)+\text{supp}(\eta)$ (see \cite[1.3.11]{Hoe}).
Moreover,
\begin{equation}\label{L1}
\|\gamma*\eta\|_\infty\leq \|\gamma\|_\infty\|\eta\|_{L^1}
\;\mbox{and}\;\,
\|\gamma*\eta\|_\infty\leq \|\gamma\|_{L^1}\|\eta\|_\infty,
\end{equation}
since $|(\gamma*\eta)(x)| \leq \int_{\R^n}|\gamma(y)|\,|\eta(x-y)|\,d\lambda(y)
\leq\|\eta\|_\infty\int_{\R^n}|\gamma(y)|\,d\lambda(y)$.
If $\gamma\in C^\infty_c(\R^n)$ and $\eta\in C^0_c(\R^n)$, then $\gamma*\eta\in C^\infty_c(\R^n)$ and
\begin{equation}\label{commu}
\partial^\alpha(\gamma*\eta)=
(\partial^\alpha \gamma)*\eta
\end{equation}
for all $\alpha\in \N_0^n$ (see 1.3.5 and 1.3.6 in \cite{Hoe}).
Likewise, $\gamma*\eta\in C^\infty_c(\R^n)$ for all
$\gamma\in C^0_c(\R^n)$ and $\eta\in C^\infty_c(\R^n)$,
with
$\partial^\alpha(\gamma*\eta)=
\gamma*\partial^\alpha \eta$.
By (\ref{commu}) and (\ref{L1}),
\begin{equation}\label{combi}
\|\gamma*\eta\|_k\leq \|\gamma\|_k\|\eta\|_{L^1}
\end{equation}
for all $\gamma \in C^\infty_c(\R^n)$, $\eta\in C^0_c(\R^n)$ and $k\in \N_0$.
Likewise,
$\|\eta*\gamma\|_k\leq \|\eta\|_{L^1}\|\gamma\|_k$.
We shall also use the obvious estimate
\begin{equation}\label{L1sup}
\|\eta\|_{L^1}\leq \lambda(\text{supp}(\eta))\|\eta\|_\infty
\quad\mbox{for $\eta \in C^0_c(\R^n)$.}
\end{equation}
Hence, given compact sets $K,L\sub \R^n$,
we have $\|\gamma*\eta\|_k\leq \lambda(L)\|\gamma\|_k\|\eta\|_\infty$
for all $\gamma\in C^\infty_K(\R^n)$, $\eta\in C^0_L(\R^n)$
and $k\in \N_0$.
This entails the first assertion of the next
lemma, and the second can be proved analogously:
\begin{la}\label{basccon}
The following bilinear maps are continuous:
\[
C^\infty_K(\R^n)\times C^0_L(\R^n)\to C^\infty_{K+L}(\R^n),\quad (\gamma,\eta)\mto \gamma * \eta\,\,\;\mbox{and}
\]
\[
C^0_K(\R^n)\times C^\infty_L(\R^n)\to C^\infty_{K+L}(\R^n),\quad (\gamma,\eta)\mto \gamma * \eta\,.\qquad
\]
\end{la}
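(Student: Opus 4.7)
The plan is to read off the lemma essentially from the inequality displayed just above the statement, after verifying that the map is well defined into the stated target space.

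First I would check that the convolution lands in $C^\infty_{K+L}(\R^n)$. If $\gamma\in C^\infty_K(\R^n)$ and $\eta\in C^0_L(\R^n)$, then the recalled results $\gamma*\eta\in C^\infty_c(\R^n)$ (from the differentiation rule \eqref{commu}) together with the support inclusion $\Supp(\gamma*\eta)\sub \Supp(\gamma)+\Supp(\eta)\sub K+L$ show that the map takes values in $C^\infty_{K+L}(\R^n)$, as claimed. The second map is handled the same way, using $\partial^\alpha(\gamma*\eta)=\gamma*\partial^\alpha\eta$ instead.

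Next I would invoke bilinearity to reduce to continuity at $(0,0)$, as in the proof of Theorem~A (via Proposition~5 in \cite[Chap.~I, \S1, no.\,6]{Bou}). Since the topology on $C^\infty_{K+L}(\R^n)$ is generated by the norms $\|.\|_k$ for $k\in \N_0$, it suffices to show, for every such $k$, that $\|\gamma*\eta\|_k$ is controlled by the product of a continuous seminorm on $C^\infty_K(\R^n)$ and one on $C^0_L(\R^n)$. For the first map this is precisely the estimate $\|\gamma*\eta\|_k\leq \lambda(L)\|\gamma\|_k\|\eta\|_\infty$ derived in the paragraph preceding the lemma from \eqref{combi} and \eqref{L1sup}; here $\|.\|_k$ is continuous on $C^\infty_K(\R^n)$ and $\|.\|_\infty=\|.\|_0$ is continuous on $C^0_L(\R^n)$. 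Hence the map is continuous at $(0,0)$, and therefore continuous.

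For the second map one runs the same argument with the roles swapped: using $\partial^\alpha(\gamma*\eta)=\gamma*\partial^\alpha\eta$ together with the second inequality in \eqref{L1} and \eqref{L1sup} applied to $\gamma$ yields
\[
\|\gamma*\eta\|_k\leq \lambda(K)\|\gamma\|_\infty\|\eta\|_k
\]
for $\gamma\in C^0_K(\R^n)$, $\eta\in C^\infty_L(\R^n)$, which again gives continuity at the origin and hence joint continuity. There is no real obstacle: the whole content is the submultiplicative norm estimate, already assembled in the text, plus the standard bilinear-at-zero argument.
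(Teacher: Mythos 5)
Your proposal is correct and follows essentially the same route as the paper: the paper derives the estimate $\|\gamma*\eta\|_k\leq \lambda(L)\|\gamma\|_k\|\eta\|_\infty$ from (\ref{combi}) and (\ref{L1sup}) and states that this (together with the support inclusion) entails the lemma, with the second map handled analogously. You merely spell out the standard reduction of joint continuity of a bilinear map to continuity at $(0,0)$, which the paper leaves implicit.
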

{\bf Proof of Corollary~\ref{Hir}.}
Choose a locally finite, smooth partition of unity $(h_i)_{i\in\N}$
on $\R^n$ such that each $h_i$ has compact support $K_i:=\text{supp}(h_i)$.
Set $E_i:=F_i:= C^\infty_{K_i}(\R^n)$ for $i\in \N$.
Then $X_i:=(C^0_{K_i}(\R^n),\|.\|_\infty)$
is a normed space and inclusion
$\phi_i\colon E_i\to X_i$, $\gamma\mapsto\gamma$ is continuous linear.
Let
$\beta_{i,j}\colon $\linebreak
$E_i\times E_j\to C^\infty_c(\R^n)$,
$\mu_{i,j}\colon E_i\times X_j\to C^\infty_c(\R^n)$,
and
$\nu_{i,j}\colon X_i\times E_j\to C^\infty_c(\R^n)$
be convolution
$(\gamma,\eta)\mto \gamma*\eta$
for $i,j\in \N$.
Lemma~\ref{basccon} implies that $\beta_{i,j}$, $\mu_{i,j}$, and $\nu_{i,j}$
are continuous bilinear.
Since
\[
\beta_{i,j}=\mu_{i,j}\circ (\id_{E_i}\times \phi_j)=\nu_{i,j}\circ (\phi_i\times \id_{E_j}),
\]
Lemma~\ref{useBan} shows that the bilinear map
$\beta\colon \bigoplus_{i\in \N}E_i\times \bigoplus_{j\in \N}E_j\to C^\infty_c(\R^n)$
from (\ref{combimap})
obtained from the above $\beta_{i,j}$
satisfies the hypotheses of Corollary~\ref{squeeze},
with $U_i:=V_i:=\phi_i^{-1}(B^{X_i}_1(0))$.
Hence $\beta$ is continuous.
But the convolution map $f\colon C^\infty_c(\R^n)\times C^\infty_c(\R^n)\to C^\infty_c(\R^n)$
can be expressed as
\begin{equation}\label{clm}
f\; =\; \beta\circ (\Phi\times \Phi)
\end{equation}
with the continuous linear map $\Phi$ introduced in Lemma~\ref{facts}\,(b)
(for $r=\infty$),
as we shall presently verify. Hence, being a composition of continuous maps, $f$
is continuous. To verify (\ref{clm}), let $\gamma,\eta\in C^\infty_c(\R^n)$.
Since $\gamma$ has compact support, only finitely many terms in the sum
$\gamma=\sum_{i\in\N}h_i\gamma$ are non-zero,
and likewise in $\eta=\sum_{j\in\N} h_j\eta$. Hence
\[
f(\gamma,\eta)=\sum_{(i,j)\in \N^2}f(h_i\gamma,h_j\eta)
=\sum_{(i,j)\in \N^2}\beta_{i,j}(h_i\gamma,h_j\eta)
=\beta((h_i\gamma)_{i\in\N}, (h_j\eta)_{j\in\N}),
\]
which coincides with $(\beta\circ(\Phi\times\Phi))(\gamma,\eta)$. The proof is complete.\,\Punkt
\section{Proof of Theorem B}\label{secprB}
We now prove Theorem~B, and then discuss the case of metrizable spaces. \\[2.3mm]
{\bf Proof of Theorem~B.}
Let
$\beta_{0,i}\colon \K\times T^i_\pi(E)\to T^i_\pi(E)$, $(z,v)\mto zv$
and
$\beta_{i,0}\colon T^i_\pi(E)\times \K\to T^i_\pi(E)$, $\beta_{i,0}(v,z):=zv$
be multiplication with scalars, for $i\in \N_0$.
For $i,j\in \N$, let $\beta_{i,j}\colon T^i_\pi(E)\times T^j_\pi(E)\to T^{i+j}_\pi(E)$
be the bilinear map determined by
\[
\beta_{i,j}(x_1\otimes\cdots \otimes x_i, y_1\otimes\cdots \otimes y_j)=
x_1\otimes\cdots \otimes x_i\otimes y_1\otimes\cdots \otimes y_j
\]
for all $x_1,\ldots, x_i,y_1,\ldots, y_j\in E$.
As we are using projective tensor topologies,
all of the bilinear maps $\beta_{i,j}$, $i,j\in \N_0$ are continuous,
which is well known.\\[2.7mm]
We first consider the special case of a normable space~$E$.
Then the multi\-plication $\beta\colon T_\pi(E)\times T_\pi(E)\to T_\pi(E)$
of the tensor algebra is the map~$\beta$ from~(\ref{combimap}),
hence continuous by Corollary~\ref{normedcase}.\\[2.3mm]
Next, let~$E$ be an arbitrary locally convex space satisfying the countable upper bound condition.
Let $U$ be a $0$-neighbourhood in $T_\pi(E)$. After\linebreak
shrinking~$U$ to a box neighbourhood,
we may assume that
$U=\bigoplus_{j\in \N_0}\wb{B}^{q_j}_1(0)$
for
continuous seminorms~$q_j$ on $T^j_\pi(E)$.
For $j\in \N_0$, let $H_j:=((T^j_\pi(E))_{q_j},\|.\|_{q_j})$
be the normed space associated to~$q_j$,
and $\rho_{q_j}\colon T^j_\pi(E)\to H_j$ the canonical map
(see (\ref{assno}) and (\ref{canoq})).
Let $V:=\bigoplus_{j\in \N_0}\wb{B}^{\|.\|_{q_j}}_1(0)\sub\bigoplus_{j\in \N_0}H_j$.
Then
\[
\rho \colon T_\pi(E)\to\bigoplus_{j\in \N_0}H_j,\quad
\rho((x_j)_{j\in \N_0}):=(\rho_{q_j}(x_j))_{j\in \N_0}
\]
is a continuous linear map, and $\rho^{-1}(V)=U$.
If we can show that $\rho\circ \beta$ is continuous,
then $(\rho\circ \beta)^{-1}(V)=\beta^{-1}(\rho^{-1}(V))=\beta^{-1}(U)$
is a $0$-neighbourhood in $T_\pi(E)\times T_\pi(E)$,
entailing that the bilinear map~$\beta$ is continuous at $0$
and hence continuous.\\[2.3mm]
To this end, recall that
the $j$-linear map $\tau_j\colon E^j \to T^j_\pi(E)$ taking $(v_1,\ldots, v_j)$ to
$v_1\tensor\cdots\tensor v_j$ is continuous, for each $j\in \N$.
Hence, for each $j\in \N$,
there exists a continuous seminorm~$p_j$ on~$E$
such that
\begin{equation}\label{hencfacto}
(\forall v_1,\ldots, v_j\in E)\quad
q_j(\tau_j(v_1,\ldots, v_j))\leq p_j (v_1)\cdots p_j(v_j)\,.
\end{equation}
By the countable upper bound condition,
there exists a continuous seminorm~$q$ on~$E$ such that
$p_j\preceq q$ for all $j\in \N$,
say
\begin{equation}\label{thusfac2}
p_j\leq C_j q
\end{equation}
with $C_j>0$.
We let $(E_q,\|.\|_q)$ be the normed space associated with~$q$,
and $\rho_q \colon E\to E_q$ be the canonical map.
For each $j\in \N$,
consider the map
\[
\tau_j'\colon (E_q)^j\to T^j_\pi(E_q), \quad (v_1,\ldots, v_j)\mto v_1\tensor\cdots\tensor v_j\,,
\]
and the direct product map $(\rho_q)^j=\rho_q\times\cdots\times \rho_q\colon E^j\to (E_q)^j$.
Then\linebreak
$\tau_j'\circ (\rho_q)^j\colon E^j\to T^j_\pi(E_q)$ is continuous $j$-linear,
and hence gives rise to a continuous linear map
$\phi_j:=T^j_\pi(\rho_q)\colon T^j_\pi(E)\to T^j_\pi(E_q)$,
determined by
\begin{equation}\label{phij}
\phi_j\circ \tau_j=\tau_j'\circ (\rho_q)^j\,.
\end{equation}
Also, define $\phi_0:=\id_\K$.
Then the linear map
\begin{equation}\label{defnphi}
\phi:=T_\pi(\rho_q)\colon T_\pi(E)\to T_\pi(E_q)\,,\quad (x_j)_{j\in \N_0}\mto (\phi_j(x_j))_{j\in \N_0}
\end{equation}
is continuous (being continuous on each summand).
For each $j\in \N$, there exists a continuous $j$-linear map
\[
\theta_j\colon (E_q)^j\to T^j_\pi(E)_{q_j}=H_j\quad\mbox{ such that }\quad \theta_j\circ (\rho_q)^j=\rho_{q_j}\circ \tau_j\,,
\]
as follows from~(\ref{hencfacto}) and~(\ref{thusfac2}).
Now the universal property of $T^j_\pi(E_q)$ provides a continuous linear map $\psi_j\colon T^j_\pi(E_q)\to
H_j$, determined by
\begin{equation}\label{ourpsi}
\psi_j\circ \tau_j'=\theta_j\,.
\end{equation}
Define $\psi_0:=\rho_{q_0}\colon \K\to H_0$.
Then the linear map
\begin{equation}\label{defnpsi}
\psi\colon T_\pi(E_q)\to \bigoplus_{j\in \N_0}H_j\,,\quad (x_j)_{j\in \N_0}\mto (\psi_j(x_j))_{j\in \N_0}
\end{equation}
is continuous.
By the special case of normed spaces already discussed,
the algebra multiplication $\beta'\colon T_\pi(E_q)\times T_\pi(E_q)\to T_\pi(E_q)$ is continuous.
We now verify that the diagram
\begin{equation}\label{diagra}
\begin{array}{ccc}
T_\pi(E)\times T_\pi(E) & \stackrel{\rho\circ \beta}{\longrightarrow} & {\displaystyle \bigoplus_{j\in \N_0}H_j}\\[1.4mm]
\;\;\;\;\;\;\;\;\downarrow \phi\times\phi & & \uparrow \psi\\[.4mm]
T_\pi(E_q)\times T_\pi(E_q) & \stackrel{\beta'}{\longrightarrow} & T_\pi(E_q)
\end{array}
\end{equation}
is commutative. If this is true, then $\rho\circ\beta=\psi\circ \beta'\circ (\phi\times\phi)$ is continuous,
which implies the continuity of~$\beta$ (as observed above).
Since both of the maps
$\rho\circ\beta$ and $\psi\circ \beta'\circ (\phi\times\phi)$
are bilinear, it suffices that they coincide on $S\times S$ for a subset $S\sub T_\pi(E)$
which spans $T_\pi(E)$. We choose $S$ as the union of $\K$
and $\bigcup_{j\in \N}\tau_j(E^j)$.
For $i,j\in \N$ and $v_1,\ldots, v_i,w_1,\ldots, w_j\in E$,
we have
\begin{eqnarray*}
\lefteqn{\psi(\beta'(\phi(v_1\tensor\cdots\tensor v_j),\phi(w_1\tensor\cdots\tensor w_j)))}\\
&=& \psi(\beta'(
\rho_q(v_1)\tensor\cdots\tensor \rho_q(v_i),
\rho_q(w_1)\tensor\cdots\tensor \rho_q(w_j)))\\
&=&
\psi_{i+j}(\rho_q(v_1)\tensor\cdots\tensor \rho_q(v_i)\tensor
\rho_q(w_1)\tensor\cdots\tensor \rho_q(w_j))\\
&=&
\theta_{i+j}(\rho_q(v_1),\ldots, \rho_q(v_i),
\rho_q(w_1),\ldots, \rho_q(w_j))\\
&=&\rho_{q_{i+j}}(v_1\tensor\cdots\tensor v_i\tensor w_1\tensor\cdots\tensor w_j)\\
&=&(\rho\circ \beta)(v_1\tensor\cdots\tensor v_i, w_1\tensor\cdots\tensor w_j)\,,
\end{eqnarray*}
as required.
For $x,y\in \K$, we have $\psi(\beta'(\phi(x),\phi(y))=\rho_{q_0}(xy)=\rho(\beta(x,y))$.
For $x\in \K$ and $w_1,\ldots,w_j\in E$, we have
\[
\begin{array}{lcl}
\psi(\beta'(\phi(x),\phi(w_1\tensor\cdots\tensor w_j))) \!\! & = &\!\!
\psi(\beta'(x,\rho_q(w_1)\tensor\cdots\tensor \rho_q(w_j)))\\
\;= \; x\psi(\rho_q(w_1)\tensor\cdots\tensor \rho_q(w_j)) \!\! & = & \!\! x\theta_j(\rho_q(w_1),\ldots,\rho_q(w_j))\\
\; = \; x\rho_{q_j}(w_1 \tensor\cdots\tensor w_j) \! \! & & \hspace*{-18mm}=\; \rho(\beta(x,w_1\tensor\cdots\tensor w_j))\,.
\end{array}
\]
Likewise, $\psi(\beta'(\phi(v_1\tensor\cdots\tensor v_i),\phi(y))=\rho(\beta(v_1\tensor\cdots\tensor v_i,y))$
for $v_1,\ldots, v_i\in E$ and $y\in \K$. Hence (\ref{diagra}) commutes, and hence~$\beta$ is continuous.\\[2.3mm]
\emph{If $T_\pi(E)$ is a topological algebra},
let $(p_j)_{j\in \N}$ be any sequence of continuous seminorms on~$E$.
Omitting only a trivial case, we may assume that~$E\not=\{0\}$.
For each $j\in\N_0$, we then find a continuous seminorm $q_j\not=0$
on $T^j_\pi(E)$. Let $Q_0(x):=|x|$ for $x\in \K$.
For $j\in\N$, let $Q_j$ be a continuous
seminorm on $T^j_\pi(E)=E\tensor_\pi T^{j-1}_\pi(E)$ such that
\[
Q_j(x\tensor y)=p_j(x)q_{j-1}(y)\quad\mbox{for all $x\in E$ and $y\in T^{j-1}_\pi(E)$}
\]
(see, e.g., \cite[III.6.3]{Sch}).
Then
$W:=\bigoplus_{j\in \N_0} \wb{B}^{Q_j}_1(0)$
is a $0$-neighbourhood in $T_\pi(E)=\bigoplus_{j\in \N_0}T^j_\pi(E)$.
Since $\beta$ is assumed continuous,
there exists a box neighbourhood $V\sub T_\pi(E)$,
of the form $V=\bigoplus_{j\in \N_0}V_j$
with $0$-neighbourhoods $V_j\sub T^j_\pi(E)$,
such that $\beta(V\times V)\sub W$
and hence
\begin{equation}\label{givescon}
(\forall j\in \N)\quad  \beta_{1,j-1}(V_1\times V_{j-1})\sub \wb{B}^{Q_j}_1(0)\,.
\end{equation}
For $j\in\N$, pick $x_j\in V_{j-1}\sub T^{j-1}_\pi(E)$ such that $q_{j-1}(x_j)\not=0$.
Then $1\geq Q_j(\beta_{1,j-1}(v,x_j))=Q_j(v\tensor x_j)=p_j(v)q_{j-1}(x_j)$ for all $v\in V_1$.
Hence
\begin{equation}\label{willimpl}
p_j(V_1)\sub [0,1/q_{j-1}(x_j)]
\end{equation}
for all $j\in \N$. Let~$q$ be a continuous seminorm on~$E$ such that
$\wb{B}^q_1(0)\sub V_1$. Then (\ref{willimpl})
implies that $p_j\leq \frac{1}{q_{j-1}(x_j)}q$ for each $j\in \N$,
and thus $p_j\preceq q$.  Hence~$E$ satisfies the countable upper bound \vspace{2mm}condition.\,\Punkt

\noindent
\begin{la}\label{metrcount}
Let $E$ be a
metrizable locally convex space.
Then $E$ satisfies the countable upper bound condition
if and and only if~$E$ is normable.
\end{la}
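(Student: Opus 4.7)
\medskip

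\noindent
\textbf{Proof plan for Lemma~\ref{metrcount}.}
The ``if'' direction is immediate: when $E$ is normable with defining norm~$\|.\|$, every continuous seminorm~$p$ satisfies $p\leq C\|.\|$ for some $C>0$, so $\|.\|$ itself serves as an upper bound for any set of continuous seminorms (of any cardinality), and in particular $E$ satisfies the countable upper bound condition. The substance is in the converse direction.

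For the ``only if'' direction, assume~$E$ is metrizable and satisfies the countable upper bound condition. Since the topology is metrizable and locally convex, it is generated by a countable family of continuous seminorms, which (after replacing the $n$th member by the maximum of the first~$n$) we may take to be an increasing sequence $p_1\leq p_2\leq\cdots$. The countable upper bound condition then supplies a continuous seminorm~$q$ on~$E$ and constants $C_n>0$ with
\[
p_n\leq C_n q\qquad\text{for every }n\in\N.
\]

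The key observation is now that~$q$ itself is controlled by one of the~$p_m$. Since $q$ is continuous and the $p_n$ form a defining sequence, the set $\{x\in E:q(x)<1\}$ contains a basic $0$-neighbourhood of the form $\{x\in E:p_m(x)<\delta\}$ for some $m\in\N$ and $\delta>0$; a standard homogeneity argument (using that $q$ is a seminorm and that $p_m(x)=0$ forces $q(\lambda x)<1$ for all $\lambda\in\K$, hence $q(x)=0$) then gives $q\leq (1/\delta)\,p_m$. Combining with the previous display yields
\[
p_n\;\leq\; C_n q\;\leq\; (C_n/\delta)\,p_m\qquad\text{for every }n\in\N,
\]
so every $p_n$ is dominated by a scalar multiple of the single seminorm $p_m$. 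Thus~$p_m$ alone generates the locally convex topology on~$E$. Because~$E$ is metrizable and hence Hausdorff, $\bigcap_n p_n^{-1}(0)=\{0\}$; the domination $p_n\leq (C_n/\delta) p_m$ forces $p_m^{-1}(0)\subseteq p_n^{-1}(0)$ for each~$n$, so $p_m^{-1}(0)=\{0\}$, i.e.\ $p_m$ is a norm. Hence~$E$ is normable.

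The only step that requires any care is passing from continuity of~$q$ to the estimate $q\leq (1/\delta) p_m$; I expect this to be the main (though still routine) point, and everything else is bookkeeping with the defining sequence of seminorms.
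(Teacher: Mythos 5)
Your proof is correct and follows essentially the same route as the paper: both take an increasing defining sequence $p_1\leq p_2\leq\cdots$, invoke the countable upper bound condition to obtain a single continuous seminorm $q$ dominating all $p_n$, and conclude normability (the paper notes directly that the balls $B^q_r(0)$ then form a $0$-neighbourhood basis, so $q$ is the norm). Your extra step of dominating $q$ by some $p_m$ and exhibiting $p_m$ as the norm is a harmless, correct variation.
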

\begin{proof}
If the topology on~$E$ comes from a norm~$\|.\|$,
then $p\preceq \|.\|$ for each continuous seminorm~$p$ on~$E$,
entailing that~$E$ satisfies the countable upper bound condition
(and $\text{UBC}(\theta)$ for each infinite cardinal~$\theta$).
Conversely, let~$E$ satisfy the countable upper bound condition.
Let \mbox{$p_1\leq p_2\leq\cdots$} be a sequence of seminorms defining the topology of~$E$.
Then there exists a continuous seminorm~$q$ on~$E$ such that
$p_j\preceq q$ for all $j\in \N$, say $p_j\leq C_j q$ with $C_j>0$.
It is clear from this that the balls $B^q_r(0)$ form a basis of $0$-neighbourhoods
in~$E$ for $r>0$. Hence~$q$ is a norm and defines the topology of~$E$. 
\end{proof}
In view of Lemma~\ref{metrcount}, Theorem~B has the following immediate
consequence:
\begin{cor}\label{thmmetr}
Let $E$ be a metrizable locally convex space.
Then $T_\pi(E)$ is a topological algebra if and only if~$E$ is normable.\Punkt
\end{cor}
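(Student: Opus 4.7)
The plan is to obtain the statement as a direct logical consequence of Theorem~B together with Lemma~\ref{metrcount}, without any further construction. Theorem~B gives the equivalence
\[
T_\pi(E) \text{ is a topological algebra} \iff E \text{ satisfies the countable upper bound condition},
\]
valid for every locally convex space~$E$ (no metrizability needed). Lemma~\ref{metrcount} gives the second equivalence
\[
E \text{ satisfies the countable upper bound condition} \iff E \text{ is normable},
\]
under the standing assumption that $E$ is metrizable. Chaining these two biconditionals yields precisely the claim of the corollary.

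There is essentially no obstacle: the work has already been done in the proofs of Theorem~B and Lemma~\ref{metrcount}. The only point to check is that the hypotheses line up, and they do — Theorem~B applies without restriction, while metrizability of~$E$ is exactly what is needed to invoke Lemma~\ref{metrcount}. Consequently, the proof reduces to a single sentence combining the two results, which is why the text labels the statement an \emph{immediate} consequence.
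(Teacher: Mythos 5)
Your proof is correct and is exactly the paper's argument: the corollary is stated as an immediate consequence of Theorem~B combined with Lemma~\ref{metrcount}, which is precisely the chaining of biconditionals you describe. Nothing further is needed.
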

\section{Tensor products beyond local convexity}
We shall deduce Theorem~C
from new results on tensor
products in the category of
general (not necessarily locally convex)
topological vector spaces.
\begin{defn}
Given
topological vector spaces $E_1,\ldots, E_j$ with $j\geq 2$,
we write $E_1\tensor_\nu \cdots\tensor_\nu E_j$
for the tensor product $E_1\tensor\cdots\tensor E_j$, equipped with the
finest vector topology $\cO_\nu$
making the `universal' $j$-linear map
\begin{equation}\label{deftau}
\tau\colon E_1\times\cdots\times E_j\to E_1\tensor\cdots\tensor E_j,
\; (x_1,\ldots, x_j)\mto x_1\tensor\cdots\tensor x_j
\end{equation}
continuous.
\end{defn}
\begin{rem}\label{elemten}
By definition of $\cO_\nu$,
a linear map $\phi\colon E_1\tensor_\nu \cdots\tensor_\nu E_j\to F$ to
a topological vector space $F$ is continuous if and only if $\phi\circ\tau$
is continuous.
If $E_1,\ldots, E_j$ are Hausdorff, then also $E_1\tensor_\nu \cdots\tensor_\nu E_j$
is Hausdorff:  If $E_1,\ldots, E_j$ are locally convex Hausdorff
or their dual spaces separate points,
this follows from the continuity of the identity map
$E_1\tensor_\nu \cdots\tensor_\nu E_j\!\to\! (E_1)_w\tensor_\pi \cdots\tensor_\pi (E_j)_w$,
using weak topologies.
In general,
the Hausdorff property follows by an induction
from the case $j=2$ in \cite{Tu2} (see \cite[Proposition~1\,(d)]{Ten}).
\end{rem}
\begin{la}\label{5.3}
Let $(E,\cO)$ be a Hausdorff topological vector space and
$K_n\not=\emptyset$ be compact, balanced subsets of $E$
such that $E=\bigcup_{n\in\N}K_n$ and
$K_n+K_n\sub K_{n+1}$
for all $n\in\N$.
Let $\cT$ be the topology on $E$ making it the direct limit
$\dl\,K_n$
as a topological space.\footnote{Thus $U\sub E$ is open if and only if $U\cap K_n$ is
relatively open in $K_n$ for each $n\in \N$.}
Then $(E,\cT)$ is a topological vector space.
\end{la}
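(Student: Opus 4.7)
The plan is to establish continuity of addition $+ \colon E \times E \to E$ and scalar multiplication $\cdot \colon \K \times E \to E$ with respect to the direct limit topology $\cT$. Two preliminary remarks will ease the argument. First, since each $K_n$ is balanced it contains~$0$, so $K_n = \{0\}+K_n \sub K_n + K_n \sub K_{n+1}$; hence the sequence $(K_n)$ is actually nested. Second, the inclusions $K_n \hookrightarrow (E,\cO)$ are $\cO$-continuous, so the final topology $\cT$ satisfies $\cO \sub \cT$; conversely the very definition of $\cT$ yields $\cT|_{K_n} \sub \cO|_{K_n}$, so the two topologies agree on each $K_n$.

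Next I would invoke the classical fact that the product of two $k_\omega$-spaces is itself a $k_\omega$-space, with compact exhaustion given by the products of the defining exhaustions (a standard property of $k_\omega$-spaces recorded in \cite{Fra}, \cite{GGH}). Applied to our setting, this gives $(E,\cT) \times (E,\cT) = \dl\,(K_n \times K_n)$ as a topological space. Similarly, $\K$ is a $k_\omega$-space with compact exhaustion $\wb{B}^\K_{2^n}(0)$, so $\K \times (E,\cT) = \dl\,(\wb{B}^\K_{2^n}(0) \times K_n)$ as a topological space.

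With these product identifications, continuity of addition reduces by the universal property of the direct limit to continuity of each restriction of $+$ to $K_n \times K_n$. The image lies in $K_n + K_n \sub K_{n+1}$, on which $\cT$ and $\cO$ agree by the first step, and $+ \colon (E,\cO) \times (E,\cO) \to (E,\cO)$ is continuous by hypothesis. For scalar multiplication, a short induction using balancedness gives $2^k K_n \sub K_{n+k}$: indeed $2K_n \sub K_n + K_n \sub K_{n+1}$, and inductively $2^{k+1}K_n = 2(2^k K_n) \sub 2 K_{n+k} \sub K_{n+k+1}$. Hence for $|\lambda| \leq 2^n$ and $x \in K_n$, balancedness yields $\lambda x = 2^n \cdot (\lambda/2^n)x \in 2^n K_n \sub K_{2n}$, so the restriction of $\cdot$ to $\wb{B}^\K_{2^n}(0) \times K_n$ lands in $K_{2n}$, where again $\cT = \cO$ and the map is $\cO$-continuous.

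The main obstacle is the invocation of the product-of-$k_\omega$-spaces result: without knowing that $(E,\cT) \times (E,\cT)$ carries the direct limit topology along the compact sets $K_n \times K_n$, one cannot deduce $\cT$-continuity of $+$ from continuity on these compact pieces, since the product topology of a direct limit is in general strictly coarser than the direct limit of the products. Fortunately, for $k_\omega$-spaces with their compact exhaustions this coincidence does hold, which is exactly what makes the approach succeed; the argument is then one of correctly citing existing theory rather than producing a new topological fact.
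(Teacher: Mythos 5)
Your proposal is correct and follows essentially the same route as the paper's proof: both reduce continuity of addition and scalar multiplication to their restrictions to the compact pieces $K_n\times K_n$ and $\wb{B}^\K_{2^n}(0)\times K_n$ via the fact that products of $k_\omega$-exhaustions exhaust the product (the paper cites \cite[Theorem 4.1]{Hir} for this), and both conclude using the containments $K_n+K_n\sub K_{n+1}$, $2^jK_n\sub K_{n+j}$ together with the observation that $\cT$ and $\cO$ agree on each $K_n$. Your explicit justification of that last observation and of the nestedness of the $K_n$ is a welcome addition that the paper leaves implicit.
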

\begin{proof}
Consider the continuous
addition map $\alpha\colon (E,\cO) \times (E,\cO)\to (E,\cO)$
and the addition map $\alpha'\colon (E,\cT) \times (E,\cT)\to (E,\cT)$.
Because $K_n+K_n\sub K_{n+1}$ and $\cT$
induces the given topology on $K_{n+1}$,
the restriction
$\alpha'|_{K_n\times K_n}=\alpha|_{K_n\times K_n}
\colon K_n\times K_n\to K_{n+1}\sub (E,\cT)$
is continuous.
Since $(E,\cT)\times (E,\cT)=\dl\, (K_n\times K_n)$\vspace{-.3mm}
as a topological space~\cite[Theorem 4.1]{Hir},
we deduce that $\alpha'$ is continuous as a map
$(E,\cT)\times (E,\cT)\to (E,\cT)$.
Next, consider the continuous scalar multiplication $\mu\colon \K\times (E,\cO)\to (E,\cO)$
and the scalar multiplication $\mu'\colon \K\times (E,\cT)\to (E,\cT)$.
To see that $\mu'$ is continuous,
it suffices to show that its restriction to a map
$\wb{B}^\K_{2^j}(0) \times (E,\cT)
\to (E,\cT)$
is continuous for each $j\in\N$.
Since $\wb{B}^\K_{2^j}(0) \times (E,\cT)=
\dl\,\wb{B}^\K_{2^j}(0) \times K_n$\vspace{-.8mm} as
a topological space, we need only show that
the restriction of $\mu'$ to $\wb{B}^\K_{2^j}(0)\times K_n$
is continuous. But $\mu'(\wb{B}^\K_{2^j}(0)\times K_n)=2^j K_n\sub K_{n+j}$,
and $\cT$ induces the given topology on $K_{n+j}$.
Hence $\mu'|_{\wb{B}^\K_{2^j}(0)\times K_n}=\mu|_{\wb{B}^\K_{2^j}(0)\times K_n}$
is continuous.
\end{proof}
\begin{la}\label{tensko}
If the topological vector spaces $E_1,\ldots, E_j$ are $k_\omega$-spaces,
then also
$E_1\tensor_\nu\cdots\tensor_\nu E_j$ is
a $k_\omega$-space.
\end{la}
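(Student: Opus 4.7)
\textbf{Proof plan for Lemma~\ref{tensko}.}
For each $i\in\{1,\ldots,j\}$, choose a $k_\omega$-sequence $K^{(i)}_1\sub K^{(i)}_2\sub\cdots$ in $E_i$, and replace $K^{(i)}_n$ by $\wb{B}^\K_1(0)\cdot K^{(i)}_n$ if necessary, so that each $K^{(i)}_n$ is compact and balanced (the scalar action on the original compact set still gives a compact set, and balanced hulls preserve the $k_\omega$-sequence property). The strategy is to build an explicit $k_\omega$-sequence $(L_n)_{n\in\N}$ inside $E_1\tensor\cdots\tensor E_j$, apply Lemma~\ref{5.3} to obtain a vector topology $\cT$ from it, and then show that $\cT$ agrees with $\cO_\nu$.

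Set $C_n:=\tau(K^{(1)}_n\times\cdots\times K^{(j)}_n)$, which is compact in $(E_1\tensor\cdots\tensor E_j,\cO_\nu)$ as a continuous image of a compact set. Let $A_n:=\wb{B}^\K_n(0)\cdot C_n$; it is again compact and, since $\wb{B}^\K_1(0)\wb{B}^\K_n(0)\sub \wb{B}^\K_n(0)$, balanced. Finally define $L_n:=\underbrace{A_n+\cdots+A_n}_{2^n\text{ times}}$. Then $L_n$ is compact and balanced, $L_n\sub L_{n+1}$ (using $0\in A_n\sub A_{n+1}$), and $L_n+L_n\sub L_{n+1}$. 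Moreover $\bigcup_n L_n=E_1\tensor\cdots\tensor E_j$: any element is a finite sum of elementary tensors $x^{(i)}_1\tensor\cdots\tensor x^{(i)}_j$, and one chooses $n$ large enough that all factors lie in the corresponding $K^{(k)}_n$ and that $2^n$ dominates the number of summands.

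Let $\cT$ be the direct limit topology on $E_1\tensor\cdots\tensor E_j$ induced by $(L_n)$. The topology $\cT$ is finer than $\cO_\nu$, because each inclusion $L_n\hookrightarrow (E_1\tensor\cdots\tensor E_j,\cO_\nu)$ is continuous; hence $\cT$ inherits the Hausdorff property from $\cO_\nu$ (which is Hausdorff by Remark~\ref{elemten}, as each $E_i$ is $k_\omega$ and therefore Hausdorff). By Lemma~\ref{5.3}, $\cT$ is a vector topology.

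It remains to show $\cT\sub \cO_\nu$. By the definition of $\cO_\nu$ as the finest vector topology making $\tau$ continuous, it suffices to prove that $\tau\colon E_1\times\cdots\times E_j\to (E_1\tensor\cdots\tensor E_j,\cT)$ is continuous. Since products of $k_\omega$-sequences are $k_\omega$-sequences for products (by the argument from \cite[Theorem~4.1]{Hir} cited in Lemma~\ref{5.3}), one has $E_1\times\cdots\times E_j=\dl\,\prod_{i=1}^j K^{(i)}_n$ as a topological space, so it suffices to check continuity of each restriction $\tau|_{\prod_iK^{(i)}_n}$. This map factors through the compact set $L_n$ (in fact through $C_n\sub A_n\sub L_n$), and the topology $\cT$ induces on $L_n$ its given compact Hausdorff topology, which coincides with the one inherited from $\cO_\nu$. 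As $\tau|_{\prod_iK^{(i)}_n}$ is continuous into $(L_n,\cO_\nu|_{L_n})$ by construction, it is continuous into $(L_n,\cT|_{L_n})$ and hence into $(E_1\tensor\cdots\tensor E_j,\cT)$. Thus $\cT=\cO_\nu$, exhibiting $(L_n)$ as a $k_\omega$-sequence for $E_1\tensor_\nu\cdots\tensor_\nu E_j$. The main subtleties will be the careful bookkeeping needed to make each $L_n$ simultaneously compact, balanced, and absorbing in the correct sense, and the verification that $\cT$ and $\cO_\nu$ agree on compacta.
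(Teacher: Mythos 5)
Your argument is correct and follows essentially the same route as the paper: build compact balanced sets as $2^n$-fold sums of (scalar multiples of) $\tau(K^{(1)}_n\times\cdots\times K^{(j)}_n)$, invoke Lemma~\ref{5.3} to get a vector topology $\cT$ from the resulting $k_\omega$-sequence, and identify $\cT$ with $\cO_\nu$ by comparing both with the continuity of $\tau$ on $\dl\,\prod_i K^{(i)}_n$. The only cosmetic difference is your extra factor $\wb{B}^\K_n(0)$ in $A_n$, which is harmless but unnecessary since $\tau(K^{(1)}_n\times\cdots\times K^{(j)}_n)$ is already balanced when each $K^{(i)}_n$ is.
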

\begin{proof}
Let $\cO_\nu$ be the topology on
$E:=
E_1\tensor_\nu\cdots\tensor_\nu E_j$.
For $i\in \{1,\ldots, j\}$, pick a $k_\omega$-sequence
$(K_{i,n})_{n\in\N}$ for~$E_i$.
After replacing $K_{i,n}$ with $\wb{B}^\K_1(0) K_{i,n}$,
we may assume that each $K_{i,n}$ is balanced.
Let
\[
K_n :=\sum_{i=1}^{2^n} (K_{1,n}\tensor\cdots \tensor K_{j,n}).
\]
Then each $K_n$ is a compact, balanced subset of
$E$,
and $E=\bigcup_{n\in\N} K_n$.
Since $K_n+ K_n\sub K_{n+1}$ by definition,
Lemma~\ref{5.3} shows that the topology $\cT$ making~$E$
the direct limit topological space $\dl\,K_n$\vspace{-.3mm} is
a vector topology.
As the inclusion maps $K_n\to (E,\cO_\nu)$ are continuous,
it follows that $\cO_\nu\sub \cT$.
Note that~$\tau$ from (\ref{deftau})
maps $L_n:=K_{1,n}\times\cdots \times K_{j,n}$
into $K_n$.
Since $\cT$ and $\cO_\nu$
induce the same topology on $K_n$
and $\tau$ is continuous as a map to $(E,\cO_\nu)$,
it follows that each restriction $\tau|_{L_n}\colon L_n\to K_n\sub (E,\cT)$
is continuous. Thus $\tau$ is continuous to $(E,\cT)$
(as $E_1\times \cdots\times E_j=\dl\, L_n$\vspace{-.3mm} by \cite[Theorem 4.1]{Hir})
and hence $\cT\sub \cO_\nu$.
Thus $\cO_\nu=\cT$,
whence $E$ is the $k_\omega$-space
$\dl\,K_n$.\vspace{-4mm}
\end{proof}
\begin{la}\label{preass}
Consider  topological vector spaces $E_1,\ldots, E_i$ and $F_1,\ldots, F_j$,
$E:=E_1\tensor_\nu \cdots\tensor_\nu E_i$ and $F:=F_1\tensor_\nu \cdots\tensor_\nu  F_j$,
and the bilinear map
\[
\kappa \colon E\times F\to E_1\tensor_\nu \cdots\tensor_\nu E_i\tensor_\nu F_1\tensor_\nu \cdots\tensor_\nu
F_j=:H
\]
determined by $\kappa(x_1\otimes\cdots \otimes x_i, y_1\otimes\cdots \otimes y_j)=
x_1\otimes\cdots \otimes x_i\otimes y_1\otimes\cdots \otimes y_j$.
If $E_1,\ldots, E_i, F_1,\ldots, F_j$ are $k_\omega$-spaces,
then $\kappa$ is continuous and the linear map
\begin{equation}\label{almasso}
\tilde{\kappa}\colon E\tensor_\nu F\to H\;\;\mbox{determined by $\; \tilde{\kappa}(v\tensor w)=\kappa(v,w)$}
\end{equation}
is an
isomorphism of topological vector spaces.
\end{la}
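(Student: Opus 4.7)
The plan is to prove continuity of $\kappa$ first, deduce continuity of $\tilde\kappa$ from the universal property of $E\tensor_\nu F$ (Remark~\ref{elemten}), and finally construct a continuous inverse $\lambda\colon H\to E\tensor_\nu F$ using the universal property of $H$. By Lemma~\ref{tensko}, the spaces $E$, $F$, and $H$ are all $k_\omega$-spaces; fix $k_\omega$-sequences $(K_{l,n})_n$ of $E_l$ and $(L_{l,n})_n$ of $F_l$, and build the $k_\omega$-sequences
\[
K_n:=\sum_{k=1}^{2^n}K_{1,n}\tensor\cdots\tensor K_{i,n}\,,\qquad K'_n:=\sum_{m=1}^{2^n}L_{1,n}\tensor\cdots\tensor L_{j,n}
\]
of $E$ and $F$ as in the proof of Lemma~\ref{tensko}.

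For the continuity of $\kappa$, I would use that $E\times F=\dl\,(K_n\times K'_n)$ as a topological space by \cite[Theorem~4.1]{Hir}, reducing the problem to continuity of each restriction $\kappa|_{K_n\times K'_n}$. Writing $\tau_E$, $\tau_F$, $\tau_H$ for the universal multilinear maps into $E$, $F$, $H$, I parametrize $K_n$ by the continuous surjection
\[
\tilde\alpha_n\colon (K_{1,n}\times\cdots\times K_{i,n})^{2^n}\to K_n\,,\quad (\vec x^{(k)})_k\mto\sum_{k=1}^{2^n}\tau_E(\vec x^{(k)})\,,
\]
and similarly define $\tilde\beta_n$ onto $K'_n$. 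The product $\tilde\alpha_n\times\tilde\beta_n$ is a continuous surjection from a compact Hausdorff space onto the Hausdorff space $K_n\times K'_n$, hence closed, hence a quotient map. The composite $\kappa\circ(\tilde\alpha_n\times\tilde\beta_n)$ equals $(\vec x,\vec y)\mto\sum_{k,m}\tau_H(\vec x^{(k)},\vec y^{(m)})$, which is continuous by continuity of $\tau_H$ and of finite addition. By the universal property of quotient maps, $\kappa|_{K_n\times K'_n}$ is then continuous.

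Continuity of $\tilde\kappa$ now follows immediately from Remark~\ref{elemten}. For the inverse, consider the $(i+j)$-linear map
\[
\mu\colon E_1\times\cdots\times E_i\times F_1\times\cdots\times F_j\to E\tensor_\nu F\,,\quad (\vec x,\vec y)\mto\tau_E(\vec x)\tensor\tau_F(\vec y)\,,
\]
which is continuous as a composition of $\tau_E\times\tau_F$ with the universal bilinear map $E\times F\to E\tensor_\nu F$. The universal property of $H$ applied to $\mu$ yields a continuous linear map $\lambda\colon H\to E\tensor_\nu F$ with $\lambda(x_1\tensor\cdots\tensor x_i\tensor y_1\tensor\cdots\tensor y_j)=(x_1\tensor\cdots\tensor x_i)\tensor(y_1\tensor\cdots\tensor y_j)$. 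Since $\lambda$ and $\tilde\kappa$ agree with the identity on the spanning sets of pure tensors in $H$ and $E\tensor_\nu F$, they are mutually inverse as linear maps, so $\tilde\kappa$ is a topological isomorphism.

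The main obstacle is the continuity of $\kappa$: because $\cO_\nu$ is defined only implicitly as the finest vector topology with a given property, direct verification from neighbourhood bases is impractical. The resolution depends on combining the $k_\omega$-reduction of Lemma~\ref{tensko} (which cuts the problem down to the compact cross-sections $K_n\times K'_n$) with the elementary fact that continuous surjections between compact Hausdorff spaces are quotient maps, which transfers continuity from the parametrizing product cubes to $K_n\times K'_n$. Once $\kappa$ is known continuous, the rest is a clean invocation of universal properties.
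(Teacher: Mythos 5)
Your proposal is correct and follows essentially the same route as the paper: it reduces continuity of $\kappa$ to the compact pieces $K_n\times K_n'$ via the $k_\omega$-sequences from Lemma~\ref{tensko} and \cite[Theorem~4.1]{Hir}, transfers the problem to the parametrizing compact cubes using the compact-onto-Hausdorff quotient-map argument, and obtains the continuous inverse of $\tilde{\kappa}$ from the universal property of $H$ applied to $\tilde{\tau}\circ(\tau\times\tau')$. The only difference is the order of the two halves (the paper settles the inverse first, then $\kappa$), which is immaterial.
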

\begin{proof}
Let $\!\tau\colon \!E_1\times \cdots\times E_i\!\to\! E$,
$\! \tau'\colon \! F_1\times\cdots\times F_j\!\to\!  F$,
$\!\tilde{\tau}\colon \! E\times F\! \to\!  E\tensor_\nu F$\linebreak
and
$\tau''\colon E_1\times \cdots\times E_i\times F_1\times\cdots\times F_j\to H$
be the universal maps.
It is known from abstract algebra that $\tilde{\kappa}$ is an isomorphism
of vector spaces. Moreover,
$\tilde{\kappa}^{-1}\circ\tau''
=\tilde{\tau}\circ (\tau\times \tau')$ is continuous,
whence $\tilde{\kappa}^{-1}$ is continuous (see Remark~\ref{elemten}).
Thus $\wt{\kappa}$ will be a topological isomorphism
if $\tilde{\kappa}$ is continuous,
which will be the case if we can show that $\kappa$ is continuous,
as $\tilde{\kappa}\circ \tilde{\tau}=\kappa$
(see Remark~\ref{elemten}).
To this end,
pick $k_\omega$-sequences $(K_{a,n})_{n\in\N}$
and $(K_{b,n}')_{n\in\N}$ of balanced sets for the spaces $E_a$ and $F_b$, respectively.
Then $K_n:=\sum_{k=1}^{2^n} (K_{1,n}\tensor\cdots \tensor K_{i,n})$ and
$K_n':=\sum_{k=1}^{2^n} K_{1,n}'\tensor\cdots \tensor K_{j,n}'$
define $k_\omega$-sequences
$(K_n)_{n\in\N}$ and $(K'_n)_{n\in\N}$
for $E$ and $F$, respectively
(see proof of Lemma~\ref{tensko}).
Moreover, $(K_n\times K_n')_{n\in\N}$ is a $k_\omega$-sequence
for $E\times F$ (cf.\ \cite[Theorem 4.1]{Hir}),
entailing that $\kappa$ will be continuous
if we can show that $\kappa|_{K_n\times K_n'}$ is continuous
for each $n\in\N$.
Consider the map
\[
q_n\colon (K_{1,n}\times\cdots\times K_{i,n}
\times
K_{1,n}'\times\cdots\times K_{j,n}')^{2^n}\to
K_n\times K_n'\, ,
\]
$(x_{1,k},\ldots,x_{i,k},y_{1,k},\ldots,y_{j,k})_{k=1}^{2^n}\!\mto\! (\sum_{k=1}^{2^n}
x_{1,k}\!\tensor \!\cdots\!\tensor \! x_{i,k},\sum_{\ell=1}^{2^n}y_{1,\ell}\!\tensor\! \cdots\!\tensor \!y_{j,\ell})$.
Then $q_n$ is a continuous map from a compact space
onto a Hausdorff space and hence a topological quotient map.
Hence $\kappa|_{K_n\times K_n'}$ is continuous
if and only if $\kappa\circ q_n$ is continuous.
But $\kappa\circ q_n$ is the map
taking $(x_{1,k},\ldots,x_{i,k},y_{1,k},\ldots,y_{j,k})_{k=1}^{2^n}$
to $\sum_{k,\ell=1}^{2^n}
x_{1,k}\tensor \cdots\tensor x_{i,k}\tensor y_{1,\ell}\tensor \cdots\tensor y_{j,\ell}$,
and hence continuous (because $\tau''$ is continuous).\vspace{-4mm}
\end{proof}
\begin{rem}
Although $\nu$-tensor products fail to be associative in general~\cite{Ten},
this pathology is absent in the case of $k_\omega$-spaces $E_1$, $E_2$, $E_3$.
In fact,
the natural vector space isomorphism
$(E_1\tensor_\nu E_2)\tensor_\nu E_3\to
E_1\tensor_\nu (E_2\tensor_\nu E_3)$ is an isomorphism
of topological vector spaces in this case as it can be written as a composition
$(E_1\tensor_\nu E_2)\tensor_\nu E_3\to
E_1\tensor_\nu E_2\tensor_\nu E_3 \to E_1\tensor_\nu (E_2\tensor_\nu E_3)$
of isomorphisms of the form discussed in Lemma~\ref{preass}.
\end{rem}
Our next lemma is a special case of \cite[Corollary 5.7]{GGH}.
\begin{la}\label{tensgood}
Let $E$ be a topological vector space.
If $E$ is a $k_\omega$-space,
then the box topology makes $T_\nu(E):=\bigoplus_{j\in\N_0}T^j_\nu(E)$
a $k_\omega$-space,
and $T_\nu(E)=\dl\,\prod_{j=0}^k T^j_\nu(E)$\vspace{-.3mm}
as a topological space.\Punkt
\end{la}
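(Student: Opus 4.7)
The plan is to build an explicit $k_\omega$-sequence for $T_\nu(E)$ by diagonalizing the $k_\omega$-sequences of the summands $T^j_\nu(E)$, and then to verify that the resulting direct-limit topology agrees both with the box topology on the direct sum and with the direct limit over the finite partial products $P_k:=\prod_{j=0}^k T^j_\nu(E)$. For each $j\in\N$ the construction in the proof of Lemma~\ref{tensko} supplies a $k_\omega$-sequence $(L_{j,n})_{n\in\N}$ of balanced compacta in $T^j_\nu(E)$ satisfying $L_{j,n}+L_{j,n}\sub L_{j,n+1}$; set $L_{0,n}:=\wb{B}^\K_{2^n}(0)$. Regarding each $L_{j,n}$ as a subset of the $j$-th summand of $T_\nu(E)$, define
\[
C_n\,:=\,L_{0,n}+L_{1,n}+\cdots+L_{n,n}\,\sub\, T_\nu(E).
\]
Then each $C_n$ is a compact, balanced subset of the topological subspace $P_n\sub T_\nu(E)$, $C_n+C_n\sub C_{n+1}$, and $\bigcup_n C_n=T_\nu(E)$ because every element of a direct sum has finite support. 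The box topology $\cO$ on $T_\nu(E)$ is Hausdorff (each $T^j_\nu(E)$ is Hausdorff by Remark~\ref{elemten}), so Lemma~\ref{5.3} produces a Hausdorff vector topology $\cT$ on $T_\nu(E)$ making it $\dl\,C_n$ as a topological space.

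The key step is to match $\cT$ with $\cO$. The inclusions $C_n\hookrightarrow(T_\nu(E),\cO)$ are continuous, so $\cO\sub\cT$. For the reverse, since both are vector topologies and the identity is linear, it suffices to check continuity of $(T_\nu(E),\cT)\to(T_\nu(E),\cO)$ at $0$: a basic $\cO$-neighbourhood $U=\bigoplus_j U_j$ meets $C_n$ in the set homeomorphic to $(U_0\cap L_{0,n})\times\cdots\times(U_n\cap L_{n,n})$, which is open in $C_n$, so $U$ is $\cT$-open. Hence $T_\nu(E)=\dl\,C_n$ is a $k_\omega$-space.

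For the second assertion, let $\cT'$ be the direct-limit topology on $T_\nu(E)$ from the inclusions $P_k\hookrightarrow T_\nu(E)$. By \cite[Theorem~4.1]{Hir}, each $P_k$ is a $k_\omega$-space with $k_\omega$-sequence $(\prod_{j=0}^k L_{j,n})_{n\geq k}$. Since $C_n\sub P_n$, every $\cT'$-open set is $\cT$-open, so $\cT'\sub\cO$. Conversely, for $\cT$-open $V$ and $n\geq k$, using that $\prod_{j=0}^k L_{j,n}\sub C_n$,
\[
V\cap\textstyle\prod_{j=0}^k L_{j,n}\,=\,(V\cap C_n)\cap\prod_{j=0}^k L_{j,n}
\]
is open in $\prod_{j=0}^k L_{j,n}$; this shows $V\cap P_k$ is open in $P_k$, so $V$ is $\cT'$-open and $\cO=\cT'=\dl\,P_k$.

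The main obstacle is the identification $\cT=\cO$: box neighbourhoods involve all countably many summands simultaneously, whereas the direct-limit topology $\cT$ is controlled only by the finite-support compacta $C_n$. The resolution is that after intersection with $C_n$ all coordinates beyond index~$n$ vanish, reducing the comparison to the finite product $L_{0,n}\times\cdots\times L_{n,n}$ where box and product topologies manifestly coincide; once this is in hand, the remaining identification $\cT'=\cT$ is routine bookkeeping using that $C_n\sub P_n$ and that the $\prod_{j=0}^k L_{j,n}$ form a $k_\omega$-sequence for $P_k$.
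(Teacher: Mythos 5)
Your construction of the sets $C_n$ and the application of Lemma~\ref{5.3} are sound, and it is worth noting that the paper itself gives no proof of this lemma but quotes it from \cite[Corollary~5.7]{GGH}, so a self-contained argument is a reasonable goal. However, the pivotal step --- the identification $\cT=\cO$ of the direct-limit topology with the box topology --- contains a genuine gap: you prove the same inclusion twice. Continuity of $\id\colon(T_\nu(E),\cT)\to(T_\nu(E),\cO)$ is \emph{equivalent} to $\cO\sub\cT$, which you had already obtained from the continuity of the inclusions $C_n\hookrightarrow(T_\nu(E),\cO)$; and indeed your verification (``$U$ meets $C_n$ in an open subset of $C_n$, so $U$ is $\cT$-open'') shows precisely that box-open sets are $\cT$-open, i.e.\ $\cO\sub\cT$ again. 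What ``the reverse'' actually requires is $\cT\sub\cO$: every $\cT$-neighbourhood of $0$ must contain a box neighbourhood. That is the non-trivial content of the lemma (it is exactly what fails for uncountable direct sums), and your argument never addresses it.

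The gap is repairable with tools already on the table. Since $(T_\nu(E),\cT)$ is a topological vector space by Lemma~\ref{5.3}, the linear map $\id\colon(T_\nu(E),\cO)\to(T_\nu(E),\cT)$ is continuous if and only if its restriction to each summand $T^j_\nu(E)$ is continuous (universal property of the box topology on countable direct sums, recalled in Section~1). Each such restriction is continuous because $T^j_\nu(E)=\dl\,L_{j,n}$ is a $k_\omega$-space and each $L_{j,n}$ is mapped homeomorphically onto a compact, hence closed, subset of $C_{\max(j,n)}$, on which $\cT$ induces the original topology; this yields $\cT\sub\cO$. Two smaller points: for $j=1$ the proof of Lemma~\ref{tensko} does not literally apply (it concerns tensor products of at least two factors), so you should indicate separately how to obtain a balanced $k_\omega$-sequence $(L_{1,n})_n$ in $E$ with $L_{1,n}+L_{1,n}\sub L_{1,n+1}$ (pass to balanced hulls and recursively replace $K_{n+1}$ by $K_{n+1}\cup(K_n+K_n)$); and in the final part the conclusion $\cO=\cT'$ also leans on the missing identity $\cT=\cO$, although your argument that $\cT=\cT'$ is correct as written.
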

\begin{prop}\label{universe}
Let $E$ be a topological vector space.
If $E$ is a $k_\omega$-space, then
$T_\nu(E)$ is a topological algebra,
which satisfies a universal property:

For every continuous linear map
$\phi\colon E\to A$ to an associative, unital\linebreak
topological algebra~$A$,
there exists a unique continuous homomorphism\linebreak
$\tilde{\phi}\colon T_\nu(E)\to A$
of unital associative algebras
such that $\tilde{\phi}|_E=\phi$.
\end{prop}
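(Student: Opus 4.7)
The proposition has two parts: continuity of the algebra multiplication $m\colon T_\nu(E)\times T_\nu(E)\to T_\nu(E)$, and the existence, uniqueness, and continuity of $\tilde\phi$.

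For continuity of $m$: write $\beta_{i,j}\colon T^i_\nu(E)\times T^j_\nu(E)\to T^{i+j}_\nu(E)$ for tensor concatenation (with the cases $i=0$ or $j=0$ reducing to scalar multiplication). Since $E$ is a $k_\omega$-space, Lemma~\ref{tensko} shows each $T^j_\nu(E)$ is a $k_\omega$-space, and Lemma~\ref{preass}, applied with all factors equal to~$E$, shows each $\beta_{i,j}$ is continuous for $i,j\in\N$. By Lemma~\ref{tensgood}, $T_\nu(E)=\dl\,\prod_{j=0}^k T^j_\nu(E)$\vspace{-.3mm} as a topological space, and since finite products of $k_\omega$-spaces commute with such direct limits (\cite[Theorem~4.1]{Hir}),
\[
T_\nu(E)\times T_\nu(E)=\dl\,\Bigl(\prod_{i=0}^k T^i_\nu(E)\Bigr)\times\Bigl(\prod_{j=0}^k T^j_\nu(E)\Bigr)
\]
as a topological space. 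It therefore suffices to check continuity of $m$ at each level, where it is the finite sum $((x_i)_{i\leq k},(y_j)_{j\leq k})\mapsto\sum_{i,j\leq k}\beta_{i,j}(x_i,y_j)$ of continuous bilinear maps with image in $\prod_{n\leq 2k}T^n_\nu(E)\sub T_\nu(E)$, hence continuous.

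For the universal property: algebraically $\tilde\phi$ is forced to satisfy $\tilde\phi(x_1\tensor\cdots\tensor x_j)=\phi(x_1)\cdots\phi(x_j)$ on $T^j_\nu(E)$ and $\tilde\phi(1)=1_A$, which gives both existence and uniqueness as an algebra map. For continuity, the $j$-linear map $E^j\to A$, $(x_1,\ldots,x_j)\mapsto \phi(x_1)\cdots\phi(x_j)$, is continuous as the composition of $\phi^j$ with the iterated multiplication of the topological algebra~$A$. By the defining universal property of $T^j_\nu(E)$ recorded in Remark~\ref{elemten}, this $j$-linear map factors through a continuous linear map $\tilde\phi|_{T^j_\nu(E)}\colon T^j_\nu(E)\to A$. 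Since $T_\nu(E)$ carries the box topology of a countable direct sum, a linear map out of it is continuous if and only if its restriction to each summand is continuous (as noted in the Notational conventions), so $\tilde\phi$ is continuous.

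The main obstacle is the continuity of~$m$: upgrading continuity of the pairwise concatenations $\beta_{i,j}$ to continuity on the full box-topology direct sum requires both Lemma~\ref{tensgood}'s direct-limit description of $T_\nu(E)$ and the stability of that description under finite Cartesian products, both of which rely essentially on the $k_\omega$-hypothesis on~$E$.
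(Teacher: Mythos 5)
Your proposal is correct and follows essentially the same route as the paper: continuity of each concatenation $\beta_{i,j}$ via Lemma~\ref{preass}, reduction of the continuity of the multiplication to finite stages using the direct-limit description of $T_\nu(E)\times T_\nu(E)$ (the paper cites \cite[Proposition~4.7]{GGH} for the interchange of countable direct limits with twofold products of $k_\omega$-spaces, where you invoke \cite[Theorem~4.1]{Hir}), and continuity of $\tilde\phi$ by restricting to the summands $T^j_\nu(E)$ and applying the universal property of $\tau_j$. No gaps.
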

\begin{proof}
Define bilinear maps $\beta_{i,j}\colon T^i_\nu(E)\times T^j_\nu(E)\to T^{i+j}_\nu(E)$
for $i,j\in\N_0$
and the algebra multiplication
$\beta\colon T_\nu(E)\times T_\nu(E)\to T_\nu(E)$
as in Section~\ref{secprB}.
Since countable direct limits and
twofold direct products of $k_\omega$-spaces
can be interchanged by \cite[Proposition 4.7]{GGH},
we have $T_\nu(E)\times T_\nu(E)
=\dl\, P_k$\vspace{-.3mm}
as a topological space,
with $P_k:=\prod_{i,j=1}^k T^i_\nu(E)\times T^j_\nu(E)$
for $k\in\N$.
Hence $\beta$ will be continuous
if $\beta|_{P_k}$ is continuous for each $k\in\N$.
But $\beta(x_1,\ldots, x_k, y_1,\ldots, y_k)
=\sum_{i,j=1}^k\beta_{i,j}(x_i,y_j)$ is a continuous function of
$(x_1,\ldots, x_k, y_1,\ldots, y_k)\in P_k$,
because $\beta_{i,j}\colon T^i_\nu(E)\times T^j_\nu(E)\to T^{i+j}_\nu(E)\sub
T_\nu(E)$ is continuous by Lemma~\ref{preass}.
Thus, $T_\nu(E)$ is a topological algebra.
For $\phi$ as described in the proposition,
there is a unique homomorphism
$\tilde{\phi}\colon T_\nu(E)\to A$
of unital associative algebras
such that $\tilde{\phi}|_E=\phi$
(as is well known from abstract algebra).
For $j\in \N$, let $\tau_j\colon E^j\to T^j_\nu(E)$
be the universal $j$-linear map.
By the universal property of the direct sum,
$\tilde{\phi}$ will be continuous
if $\tilde{\phi}|_{T^j_\nu(E)}$
is continuous for each $j\in \N_0$,
which holds if and only if $\tilde{\phi}\circ \tau_j$
is continuous for each $j\in \N$
(continuity is trivial if $j=0$).
But $\tilde{\phi}\circ \tau_j$
is the map $E^j\to A$, $(x_1,\ldots, x_j)\mto \phi(x_1)\cdots \phi(x_j)$,
which indeed is continuous.\vspace{-7mm}
\end{proof}
\section{Observations on convexifications}
Recall that each topological vector space $Y$
admits a finest locally convex topology $\cO_{\text{lcx}}$
which is coarser than the given topology.
We call $Y_{\text{lcx}}:=(Y,{\cal O}_{\text{lcx}})$
the \emph{convexification} of~$Y$.
Convex hulls of $0$-neighbourhoods
in~$Y$ form a basis of $0$-neighbourhoods for
a locally convex vector topology on~$Y$,
and it is clear that this topology coincides with $\cO_{\text{lcx}}$.
\begin{la}\label{5.1}
If $\theta\colon E_1\times \cdots\times E_j\to Z$
is a continuous $j$-linear map between topological
vector spaces, then $\theta$
is also continuous as a mapping from  $(E_1)_{\lcx }\times\cdots\times
(E_j)_{\lcx }$ to $Z_{\lcx }$.
\end{la}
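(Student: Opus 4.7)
The plan is to unwind the description of the convexified topologies and show that the convex hull of a $0$-neighbourhood can always be pushed through $\theta$ thanks to multilinearity.

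First I would reduce what has to be checked. A basis of $0$-neighbourhoods for $Z_{\lcx}$ is given by the sets $\conv(W_0)$, where $W_0$ ranges over $0$-neighbourhoods of $Z$; likewise, a basis of $0$-neighbourhoods for $(E_i)_{\lcx}$ is given by $\conv(V_i)$ for $V_i$ a $0$-neighbourhood in $E_i$. So, to prove continuity at the origin (which suffices by the usual translation argument for multilinear maps, together with the fact that $(E_i)_{\lcx}$ has the same underlying additive group as $E_i$), it is enough to fix a $0$-neighbourhood $W_0$ in $Z$ and produce $0$-neighbourhoods $V_i$ in $E_i$ with
\[
\theta\bigl(\conv(V_1)\times \cdots \times \conv(V_j)\bigr)\;\sub\; \conv(W_0).
\]

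Next I would invoke continuity of $\theta$ with respect to the original topologies to select $0$-neighbourhoods $V_i$ in $E_i$ satisfying $\theta(V_1\times\cdots\times V_j)\sub W_0$. The crucial step is then the following claim, which is where all the work sits:
\[
\theta\bigl(\conv(V_1)\times \cdots \times \conv(V_j)\bigr)\;\sub\;\conv\bigl(\theta(V_1\times\cdots\times V_j)\bigr).
\]
To verify it, I would take an arbitrary tuple $(y_1,\ldots,y_j)$ in the left-hand product, write $y_i=\sum_{k_i}t_{i,k_i}x_{i,k_i}$ as a convex combination with $x_{i,k_i}\in V_i$, and expand $\theta(y_1,\ldots,y_j)$ by $j$-linearity as
\[
\sum_{k_1,\ldots,k_j}\Bigl(\prod_{i=1}^{j}t_{i,k_i}\Bigr)\,\theta(x_{1,k_1},\ldots,x_{j,k_j}).
\]
The coefficients $\prod_i t_{i,k_i}$ are non-negative and sum to $\prod_i(\sum_{k_i}t_{i,k_i})=1$, so this exhibits $\theta(y_1,\ldots,y_j)$ as a convex combination of elements of $\theta(V_1\times\cdots\times V_j)$. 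Combining the claim with $\theta(V_1\times\cdots\times V_j)\sub W_0$ yields the desired inclusion.

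The main obstacle, modest as it is, is precisely this expansion argument: one has to notice that the product of convex-combination weights is itself a system of convex weights, which is exactly what makes multilinear maps behave well with respect to convex hulls. Once that observation is in hand, the statement follows immediately from the definition of the convexified topology, and no recourse to Hausdorffness, local convexity of the $E_i$, or any auxiliary construction is needed.
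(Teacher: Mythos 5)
Your proof is correct and takes essentially the same route as the paper's: both reduce the claim to showing $\theta(\conv(V_1)\times\cdots\times\conv(V_j))\sub\conv(W_0)$ for $0$-neighbourhoods $V_i$ with $\theta(V_1\times\cdots\times V_j)\sub W_0$, and both derive this purely from multilinearity. The only difference is organizational: the paper pushes the convex hull through one argument slot at a time by induction, whereas you expand all convex combinations simultaneously and note that the products of the weights again form a convex weight system --- the two computations are equivalent.
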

\begin{proof}
If $W\sub Z$ is a $0$-neighbourhood, there are $0$-neighbourhoods
$U_i\sub E_i$ for $i\in\{1,\ldots, j\}$
with $\theta(U_1\times \cdots\times U_j)\sub W$.
If $x=(x_1,\ldots, x_{j-1})$ is an element of $U_1\times\cdots\times U_{j-1}$,
then $\theta(x,U_j)\sub W$ implies $\theta(x,\conv(U_j))\sub \conv(W)$.
Inductively, $\theta(x_1,\ldots,x_{i-1},\conv(U_i)\times\cdots\times  \conv(U_j))
\sub \conv (W)$ for all $i=j,j-1,\ldots, 1$.
Thus
$\theta(\conv(U_1)\times \cdots\times \conv(U_j))
\sub \conv (W)$.
\end{proof}
\begin{la}\label{cfyalg}
If $A$ is a topological algebra,
with multiplication $\theta\colon A\times A\to A$,
then also $(A_{\lcx },\theta)$ is a topological algebra.
\end{la}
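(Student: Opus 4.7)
The statement is an immediate corollary of Lemma~\ref{5.1}, so the ``plan'' is essentially to observe this and package it correctly.

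First, I would recall what must be verified: a topological algebra is a topological vector space together with a continuous bilinear multiplication. Since $A_{\text{lcx}}$ is a locally convex (and in particular a topological) vector space by construction, the only nontrivial content is the continuity of
\[
\theta\colon A_{\text{lcx}}\times A_{\text{lcx}}\to A_{\text{lcx}}.
\]

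Next, I would simply apply Lemma~\ref{5.1} in the case $j=2$, $E_1=E_2=Z=A$. By hypothesis $\theta$ is continuous as a bilinear map $A\times A\to A$, so Lemma~\ref{5.1} gives continuity as a map $A_{\text{lcx}}\times A_{\text{lcx}}\to A_{\text{lcx}}$, which is exactly what is needed.

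There is no main obstacle here, since the lemma preceding this statement does all the work; the only thing to be careful about is the remark that the product topology on $A_{\text{lcx}}\times A_{\text{lcx}}$ is the correct one to plug in, which is already built into the formulation of Lemma~\ref{5.1}. Thus the proof will be a one-line deduction from Lemma~\ref{5.1}.
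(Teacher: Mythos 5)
Your proposal is correct and coincides with the paper's own proof, which also consists of a single line: apply Lemma~\ref{5.1} to the bilinear map $\theta$ (i.e., the case $j=2$, $E_1=E_2=Z=A$). Nothing further is needed.
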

\begin{proof}
Apply Lemma~\ref{5.1} to the bilinear map $\theta$.
\end{proof}
\begin{la}\label{nu2pi}
$(E_1\tensor_\nu\cdots\tensor_\nu E_j)_{\lcx }
=(E_1)_{\lcx }\tensor_\pi\cdots\tensor_\pi (E_j)_{\lcx }$,
for all topological vector spaces $E_1,\ldots, E_j$.
In particular, $(T_\nu^j(E))_{\lcx }=T^j_\pi(E_{\lcx })$
for each topological vector space~$E$.
\end{la}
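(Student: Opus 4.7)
The approach is to verify that both sides induce the same topology on the common underlying vector space $L := E_1 \tensor \cdots \tensor E_j$. Let $\cO_\nu$ denote the $\nu$-tensor product topology on $L$ obtained from $E_1,\ldots, E_j$, and let $\cO_\pi$ denote the $\pi$-tensor product topology on $L$ obtained from the convexifications $(E_1)_{\lcx},\ldots,(E_j)_{\lcx}$. The goal is $(\cO_\nu)_{\lcx} = \cO_\pi$, which I would establish by proving two inclusions.

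For $\cO_\pi \sub (\cO_\nu)_{\lcx}$, I would first argue $\cO_\pi \sub \cO_\nu$. The identities $E_i \to (E_i)_{\lcx}$ are continuous, so composing the product map $E_1 \times \cdots \times E_j \to (E_1)_{\lcx} \times \cdots \times (E_j)_{\lcx}$ with the continuous universal $j$-linear map into $(L,\cO_\pi)$ exhibits $\tau\colon E_1 \times \cdots \times E_j \to (L,\cO_\pi)$ as continuous. Since $\cO_\pi$ is a vector topology and $\cO_\nu$ is by definition the finest vector topology rendering $\tau$ continuous, we get $\cO_\pi \sub \cO_\nu$. As $\cO_\pi$ is locally convex and $(\cO_\nu)_{\lcx}$ is the finest locally convex topology coarser than $\cO_\nu$, the desired inclusion $\cO_\pi \sub (\cO_\nu)_{\lcx}$ follows.

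For the reverse inclusion $(\cO_\nu)_{\lcx} \sub \cO_\pi$, I start from the fact that $\tau\colon E_1 \times \cdots \times E_j \to (L,\cO_\nu)$ is continuous by definition, hence so is its composition with the identity $(L,\cO_\nu) \to (L,(\cO_\nu)_{\lcx})$. Since the target $(L,(\cO_\nu)_{\lcx})$ is locally convex, Lemma~\ref{5.1} promotes this to continuity of the $j$-linear map $(E_1)_{\lcx} \times \cdots \times (E_j)_{\lcx} \to (L,(\cO_\nu)_{\lcx})$. The universal property of the projective tensor product then yields a continuous linear map $(L,\cO_\pi) \to (L,(\cO_\nu)_{\lcx})$, which on underlying vector spaces is the identity; hence $(\cO_\nu)_{\lcx} \sub \cO_\pi$. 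The two inclusions give $(\cO_\nu)_{\lcx} = \cO_\pi$, and specializing to $E_1 = \cdots = E_j = E$ produces the second assertion $(T^j_\nu(E))_{\lcx} = T^j_\pi(E_{\lcx})$.

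No single step presents a real obstacle; the proof is a diagram chase through the universal properties of $\tensor_\nu$, $\tensor_\pi$, and convexification, with Lemma~\ref{5.1} supplying the transition from $\tau$ on the original spaces to $\tau$ on their convexifications. The only point requiring care is bookkeeping of which vector topology on $L$ one is comparing at each stage, since all four topologies ($\cO_\nu$, $(\cO_\nu)_{\lcx}$, $\cO_\pi$, and the one pulled back via identities) live on the same underlying vector space.
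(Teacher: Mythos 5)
Your proposal is correct and follows essentially the same route as the paper: one inclusion from the observation that $\cO_\pi$ is a locally convex topology coarser than $\cO_\nu$, the other by applying Lemma~\ref{5.1} to the universal $j$-linear map and invoking the universal property of the projective tensor topology. You merely spell out in more detail why $\cO_\pi\sub\cO_\nu$, which the paper leaves implicit.
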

\begin{proof}
Let $\cO_\nu$ be the topology on
$E_1\tensor_\nu\cdots\tensor_\nu E_j$
and $\cO_\pi$ be the topology on
$(E_1)_{\lcx }\tensor_\pi\cdots\tensor_\pi (E_j)_{\lcx }$.
Since~$\cO_\pi$ is locally convex and coarser
than~$\cO_\nu$, it follows that $\cO_\pi\sub (\cO_\nu)_{\lcx }$.
The universal $j$-linear map
$\tau$ from (\ref{deftau})
is continuous as a map
$E_1\times\cdots\times E_j\to E_1\tensor_\nu\cdots\tensor_\nu E_j$
and hence also continuous as a map
$(E_1)_{\lcx }\times\cdots\times (E_j)_{\lcx }\to (E_1\tensor_\nu\cdots\tensor_\nu E_j)_{\lcx }$,
by Lemma~\ref{5.1}. Hence $(\cO_\nu)_{\lcx }\sub \cO_\pi$,
and hence both topologies coincide.
\end{proof}
\begin{la}\label{5.2}
${\displaystyle \Big(\bigoplus_{j\in\N}E_j\Big)_{\lcx }=
\bigoplus_{j\in\N}(E_j)_{\lcx }}$,
for all topological vector spaces~$E_j$.
\end{la}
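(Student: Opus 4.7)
The plan is to establish the equality by a double inclusion of topologies, exploiting two universal properties already available: the characterization of $(\,\cdot\,)_{\lcx }$ as the finest locally convex topology coarser than the given one, and the description of the box-topology direct sum recalled in Section~1 (a linear map out of $\bigoplus_{j\in \N}E_j$ into a topological vector space is continuous if and only if its restriction to each summand is).

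For one direction, I would observe that $\bigoplus_{j\in\N}(E_j)_{\lcx }$ is locally convex (each summand being so, as noted in Section~1) and carries a topology coarser than that of $\bigoplus_{j\in\N}E_j$: a basic $0$-neighbourhood $\bigoplus_{j\in\N}U_j$ of $\bigoplus_{j\in\N}(E_j)_{\lcx }$, where $U_j$ is a $0$-neighbourhood in $(E_j)_{\lcx }$, is automatically a $0$-neighbourhood in $\bigoplus_{j\in\N}E_j$ since $U_j$ is also a $0$-neighbourhood in $E_j$. By maximality of the convexification, this forces $\bigoplus_{j\in\N}(E_j)_{\lcx }$ to be coarser than $(\bigoplus_{j\in\N}E_j)_{\lcx }$.

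For the reverse direction, I would apply Lemma~\ref{5.1} in its special case $j=1$ to the continuous linear inclusion $E_j\hookrightarrow \bigoplus_{k\in\N}E_k$, yielding a continuous linear map $(E_j)_{\lcx }\to (\bigoplus_{k\in\N}E_k)_{\lcx }$ for each $j\in\N$. Invoking the universal property of the box-topology direct sum recalled in Section~1 (applied with the target $(\bigoplus_{j\in\N}E_j)_{\lcx }$, which is a topological vector space), the identity map $\bigoplus_{j\in\N}(E_j)_{\lcx }\to (\bigoplus_{j\in\N}E_j)_{\lcx }$ is continuous; this says the topology of the target is coarser than that of the source. Combining with the first inclusion yields the desired equality.

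I do not expect a genuine obstacle here, as the argument is a purely formal universal-property comparison. The only point deserving attention is to notice that Lemma~\ref{5.1}, though stated for $j$-linear maps, specializes at $j=1$ to give exactly the functoriality of $(\,\cdot\,)_{\lcx }$ on continuous linear maps, which is precisely the input needed for the second inclusion.
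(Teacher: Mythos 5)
Your proof is correct, but the nontrivial half is argued differently from the paper. Both you and the paper dispose of the first inclusion the same way: $\bigoplus_{j\in\N}(E_j)_{\lcx}$ is locally convex and coarser than $\bigoplus_{j\in\N}E_j$, hence coarser than the convexification of the latter. For the reverse inclusion the paper works with explicit neighbourhoods: using that convex hulls of $0$-neighbourhoods form a basis for the convexification, it takes a basic balanced box neighbourhood $U=\bigoplus_{j\in\N}U_j$ and exhibits the concrete inclusion $\bigoplus_{j\in\N}2^{-j}\conv(U_j)\sub\conv(U)$ (the $2^{-j}$ scaling making the subconvex-combination argument work, which is why balancedness of the $U_j$ is arranged first). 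You instead argue categorically: Lemma~\ref{5.1} at $j=1$ gives functoriality of $(\cdot)_{\lcx}$ on continuous linear maps, so each inclusion $(E_j)_{\lcx}\to\bigl(\bigoplus_{k\in\N}E_k\bigr)_{\lcx}$ is continuous, and the universal property of the box topology (Section~1, applicable since the target is a topological vector space) makes the identity $\bigoplus_{j\in\N}(E_j)_{\lcx}\to\bigl(\bigoplus_{j\in\N}E_j\bigr)_{\lcx}$ continuous. Both routes are valid and of comparable length; yours avoids the balancedness and scaling bookkeeping entirely and makes the formal nature of the statement transparent, while the paper's computation has the side benefit of producing an explicit comparison of neighbourhood bases.
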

\begin{proof}
Both spaces coincide
as abstract vector spaces, and the topology on the
right hand side is coarser.
But it is also finer, because
for all balanced $0$-neighbourhoods $U_j\sub E_j$
and $U:=\bigoplus_{j\in\N}U_j$,
we have $\conv(U_j)\sub \conv(U)$ for each $j$ and thus
$\bigoplus_{j\in\N}2^{-j}\conv(U_j)\sub \conv(U)$.\vspace{-7mm}
\end{proof}
\section{Proof of Theorem C}\label{secpc}
Taking $F:=E$, Theorem~C follows from the next result:
\begin{prop}\label{remvalid}
Let $E$ be a locally convex space.
If $E=F_{\lcx }$ for a topological vector space $F$ which is a $k_\omega$-space,
then $T_\pi(E)$ is topological algebra.
\end{prop}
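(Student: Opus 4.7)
The plan is to reduce Proposition~\ref{remvalid} to Proposition~\ref{universe} by passing through convexifications. Since $F$ is a $k_\omega$-space (as a topological vector space, not assumed locally convex), Proposition~\ref{universe} directly yields that $T_\nu(F)=\bigoplus_{j\in \N_0}T^j_\nu(F)$ is a topological algebra. Next, Lemma~\ref{cfyalg} shows that the convexification $(T_\nu(F))_{\lcx}$, equipped with the same multiplication, is again a topological algebra.

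What remains is the identification of $(T_\nu(F))_{\lcx}$ with $T_\pi(E)$ as a topological vector space. First I would apply Lemma~\ref{5.2} to interchange convexification with the countable direct sum, giving
\[
(T_\nu(F))_{\lcx}\;=\;\Big(\bigoplus_{j\in \N_0}T^j_\nu(F)\Big)_{\lcx}\;=\;\bigoplus_{j\in \N_0}(T^j_\nu(F))_{\lcx}\,.
\]
Then I would apply Lemma~\ref{nu2pi} summand-wise to identify $(T^j_\nu(F))_{\lcx}$ with $T^j_\pi(F_{\lcx})=T^j_\pi(E)$ for $j\geq 1$; the cases $j=0$ (both sides are $\K$) and $j=1$ (both sides are $E=F_{\lcx}$) are trivial. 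Assembling these, $(T_\nu(F))_{\lcx}=\bigoplus_{j\in\N_0}T^j_\pi(E)=T_\pi(E)$ as locally convex topological vector spaces.

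Finally, I would observe that the two algebra multiplications match under this identification: both are characterized by the same formula on elementary tensors, namely $(v_1\tensor\cdots\tensor v_i)\cdot (w_1\tensor\cdots\tensor w_j)=v_1\tensor\cdots\tensor v_i\tensor w_1\tensor\cdots\tensor w_j$. Hence the continuity of multiplication on $(T_\nu(F))_{\lcx}$ transfers to $T_\pi(E)$, completing the proof.

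There is no real obstacle here: the argument is pure bookkeeping, and all of the substance has been absorbed into Proposition~\ref{universe} (which gave the topological-algebra structure on $T_\nu(F)$) and into the convexification lemmas of the previous section. The only point requiring a moment's care is verifying that the identification of topological vector spaces is an identification of algebras, but this follows at once from the universal properties of $\tensor_\nu$ and $\tensor_\pi$.
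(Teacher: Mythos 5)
Your proposal is correct and follows essentially the same route as the paper: Proposition~\ref{universe} gives that $T_\nu(F)$ is a topological algebra, Lemma~\ref{cfyalg} passes to the convexification, and Lemmas~\ref{5.2} and~\ref{nu2pi} identify $(T_\nu(F))_{\lcx}$ with $T_\pi(E)$. The only difference is your final paragraph about matching the multiplications, which is automatic here since $(T_\nu(F))_{\lcx}$ and $T_\pi(E)$ carry literally the same underlying algebra structure, so no separate verification is needed.
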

\begin{proof}
By Proposition~\ref{universe}, $T_\nu(F)$ is a topological
algebra. Hence also $(T_\nu(F))_{\lcx }$ is a topological
algebra, by Lemma~\ref{cfyalg}.
But
\[
(T_\nu(F))_{\lcx }=(\bigoplus_{j\in\N_0}T^j_\nu(F))_{\lcx }
=\bigoplus_{j\in\N_0}(T^j_\nu(F))_{\lcx }
=\bigoplus_{j\in\N_0}T^j_\pi(F_{\lcx })
=\bigoplus_{j\in\N_0}T^j_\pi(E)\]
coincides with $T_\pi(E)$
(using Lemma~\ref{5.2} for the second equality and Lemma~\ref{nu2pi}
for the third).
\end{proof}
The notion of a free locally convex space goes back to~\cite{Mar}.
Given a topological space $X$,
let $\K^{(X)}$ be the free vector space over $X$.
Write $V(X)$
for $\K^{(X)}$,
equipped with the finest vector topology making
the canonical map $\eta_X\colon X\to\K^{(X)}$, $x\mto \delta_{x,.}$ continuous.
Write $L(X)$
for $\K^{(X)}$,
equipped with the finest locally vector topology making
$\eta_X$ continuous.
Call $V(X)$ and $L(X)$ the free topological vector space over~$X$,
respectively, the free locally convex space over~$X$.
\begin{cor}\label{free}
Let $E=L(X)$ be the free locally convex space over a $k_\omega$-space~$X$.
Then $T_\pi(E)$ is a topological algebra.
\end{cor}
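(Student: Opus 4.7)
The strategy is to reduce the statement to Proposition~\ref{remvalid} applied with $F:=V(X)$, the free \emph{topological} vector space over~$X$. This requires two inputs: first, the identification $L(X)=V(X)_{\lcx}$, which lets us view $E=L(X)$ as the convexification of a topological vector space; second, the fact that $V(X)$ is again a $k_\omega$-space when $X$ is. Once both are established, Proposition~\ref{remvalid} immediately yields that $T_\pi(E)$ is a topological algebra.

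For the identification $L(X)=V(X)_{\lcx}$, I would argue by mutual comparison of universal properties. On the one hand, $V(X)_{\lcx}$ carries a locally convex vector topology that is coarser than $V(X)$'s, so $\eta_X\colon X\to V(X)_{\lcx}$ remains continuous; the maximality of the topology on $L(X)$ among locally convex vector topologies making $\eta_X$ continuous then forces the topology of $V(X)_{\lcx}$ to be at most as fine as that of $L(X)$. On the other hand, $L(X)$ is itself a (locally convex) vector topology making $\eta_X$ continuous, so by the universal property of $V(X)$ it is coarser than $V(X)$'s topology; being locally convex, it is therefore coarser than the finest locally convex topology below $V(X)$, namely $V(X)_{\lcx}$. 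Combining the two inclusions gives the desired equality.

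For the $k_\omega$-property of $V(X)$, I would pick a $k_\omega$-sequence $K_1\sub K_2\sub\cdots$ of balanced compact subsets of~$X$ with $X=\bigcup_n K_n$ (one may assume $K_n\cup\{0\}\sub K_{n+1}$), and set
\[
C_n\;:=\;\Big\{\sum_{k=1}^n \lambda_k\,\eta_X(x_k)\,:\,|\lambda_k|\leq n,\,x_k\in K_n\Big\},
\]
which is a compact, balanced subset of $V(X)$ as the continuous image of the compact product $(\wb{B}^\K_n(0)\times K_n)^n$. After adjusting the sequence so that $C_n+C_n\sub C_{n+1}$ (replace $C_n$ by a larger compact set of the same form), Lemma~\ref{5.3} shows that the direct limit topology $\cT:=\dl\,C_n$\vspace{-.3mm} on the underlying vector space $\K^{(X)}$ is a vector topology. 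Since each inclusion $C_n\hookrightarrow V(X)$ is continuous, $\cT$ is finer than the topology of~$V(X)$; conversely, $\eta_X\colon X\to(\K^{(X)},\cT)$ is continuous because its restriction to each $K_n$ factors through $C_n$, and the universal property of $V(X)$ then forces $\cT$ to be coarser than $V(X)$'s topology. Hence the two topologies coincide, and $V(X)=\dl\,C_n$\vspace{-.3mm} is a $k_\omega$-space (Hausdorffness being inherited from the Hausdorff convexification $L(X)=V(X)_{\lcx}$, whose continuous seminorms separate points of~$V(X)$).

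The main obstacle is the second step: verifying directly that $V(X)$ is a $k_\omega$-space when $X$ is, including the technical point that the compact `building blocks' $C_n$ can be arranged to satisfy the hypotheses of Lemma~\ref{5.3} and that the resulting direct limit topology coincides with~$V(X)$'s. Once this is in hand, Proposition~\ref{remvalid} closes the argument without further work.
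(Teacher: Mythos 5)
Your proposal takes essentially the same route as the paper: identify $L(X)=(V(X))_{\lcx}$, observe that $V(X)$ is a $k_\omega$-space whenever $X$ is, and apply Proposition~\ref{remvalid}; the paper simply cites \cite[Lemma 5.5]{Kyo} for the second fact, whereas you reprove it via Lemma~\ref{5.3}, which is a correct (and essentially the standard) argument. One cosmetic slip: the compact sets $K_n\sub X$ cannot be ``balanced'' and $K_n\cup\{0\}$ is meaningless since $X$ is only a topological space, but nothing in your construction of the sets $C_n$ actually uses this, so the argument stands.
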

\begin{proof}
As is clear,
$L(X)=(V(X))_{\lcx }$.
It is well known that $V(X)$ is
$k_\omega$ if so is~$X$ (see, e.g., \cite[Lemma 5.5]{Kyo}).
Hence Proposition~\ref{remvalid} applies.\vspace{-3mm}
\end{proof}
\section{Some spaces with upper bound conditions}\label{secexs}
Recall from the proof of Lemma~\ref{metrcount}
that every normable space satisfies the $\text{UBC}(\theta)$
for each infinite cardinal~$\theta$.
Combining Theorem~B and Proposition~\ref{remvalid}, we obtain
further examples of spaces with upper bound conditions:
\begin{cor}\label{komcount}
Let $E$ be a locally convex space.
If $E$ is a $k_\omega$-space or $E=F_{\lcx }$ for some
topological vector space which is a $k_\omega$-space,
then $E$ satisfies the countable upper bound condition.\Punkt
\end{cor}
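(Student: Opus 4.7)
The plan is to reduce both cases to a direct combination of Proposition~\ref{remvalid} and the forward direction of Theorem~B, so essentially no new work is needed beyond checking that the first case fits the hypothesis of Proposition~\ref{remvalid}.

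First I would dispose of the case $E=F_{\lcx}$ for a $k_\omega$-space $F$: Proposition~\ref{remvalid} directly gives that $T_\pi(E)$ is a topological algebra, and then the ``only if'' direction of Theorem~B yields that $E$ satisfies the countable upper bound condition. For the case in which $E$ is itself a $k_\omega$-space, I would observe that since $E$ is already locally convex, we have $E=E_{\lcx}$; thus taking $F:=E$ we are in the situation $E=F_{\lcx}$ with $F$ a $k_\omega$-space, and the previous case applies.

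The only substantive point that deserves a sentence of justification is this reduction $E=E_{\lcx}$ for a locally convex space $E$, which is immediate from the definition of the convexification (the finest locally convex topology coarser than the given topology coincides with the given topology when the latter is already locally convex). No obstacle is expected here, because all the real work has been carried out in Theorem~B (where the countable upper bound condition is extracted from continuity of algebra multiplication on $T_\pi(E)$) and in Proposition~\ref{universe} together with its convexification Proposition~\ref{remvalid} (which produces the topological algebra structure on $T_\pi(E)$ from the $k_\omega$-hypothesis via $\nu$-tensor products and Lemma~\ref{cfyalg}). So the corollary is really a bookkeeping statement, and I would present the proof as two lines invoking the two earlier results.
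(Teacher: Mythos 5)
Your proof is correct and matches the paper's intended argument exactly: the corollary is stated with the remark ``Combining Theorem~B and Proposition~\ref{remvalid}\dots'', and the reduction of the $k_\omega$ case via $E=E_{\lcx}$ is the same device the paper uses to deduce Theorem~C from Proposition~\ref{remvalid}. Nothing is missing.
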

Let $\theta$ be an arbitrary infinite cardinal now.
Then there exists a non-normable space satisfying the $\text{UBC}(\theta)$,
but not the $\text{UBC}(\theta')$ for any $\theta'>\theta$:
\begin{example}
Let $X$ be a set of cardinality $|X|>\theta$, and $\cY$ be the
set of all subsets $Y\sub X$ of cardinality $|Y|\leq \theta$.
Let $E:=\ell^\infty(X)$ be the vector space of bounded $\K$-valued
functions on~$X$,
equipped with the (unusual) vector
topology $\cO_\theta$ defined by the seminorms\vspace{-1mm}
\[
\|.\|_Y\colon E\to [0,\infty[\,,\quad
\|\gamma\|_Y:=\sup\{|\gamma(x)|\colon x\in Y\}\vspace{-1mm}
\]
for subsets $Y\in \cY$.
Note that a function $\gamma\colon X\to\K$
is bounded if and only if all of its restrictions
to countable subsets of~$X$ are bounded.
Hence $E$ can be expressed as the projective limit\vspace{-3mm}
\[
\pl_{Y\in \cY}\,(\ell^\infty(Y),\|.\|_\infty)\vspace{-3mm}
\]
of Banach spaces (with the apparent restriction maps as the bonding maps and limit maps),
and thus~$E$ is complete.
For each $Y\in \cY$, we have $Y\not=X$ by reasons of cardinality,
whence an $y\in X\setminus Y$ exists.
Define $\delta_y\colon X\to\K$, $\delta_y(y):=\delta_{x,y}$ using Kronecker's~$\delta$.
Then $\delta_y\not=0$ and $\|\delta_y\|_Y=0$, whence $\|.\|_Y$
is not a norm. As a consequence, $E$ is not normable.
To see that~$E$ satisfies the $\text{UBC}(\theta)$,
let $(p_j)_{j\in J}$ be a family of continuous seminorms on~$E$
such that $|J|\leq\theta$. For each $j\in J$, there exists a subset $Y_j\sub X$
with $|Y_j|\leq\theta$ and $C_j>0$ such that $p_j\leq C_j\|.\|_{Y_j}$.
Set $Y:=\bigcup_{j\in J}Y_j$. Then $|Y|\leq |J|\,\theta\leq\theta\theta=\theta$.
Hence $q:=\|.\|_Y$ is a continuous seminorm on~$E$,
and $p_j\leq C_j q$ for all~$j$.
Finally, let $Z\sub X$ be a subset of cardinality $\theta<|Z|\leq\theta'$.
Suppose we could find a continuous seminorm $p$ on $E$
such that $\|.\|_{\{z\}}\preceq p$ for all $z\in Z$.
We may assume that $p=\|.\|_Y$ for some $Y\in \cY$.
But then $z\in Y$ for all $z\in Z$
and hence $|Y|\geq |Z|>\theta$, contradiction.
\end{example}
{\small
Helge Gl\"{o}ckner, Universit\"{a}t Paderborn, Institut f\"ur Mathematik,\\
Warburger Str.\ 100, 33098 Paderborn, Germany. \,Email: {\tt  glockner\at{}math.upb.de}}
\end{document}